\documentclass[12pt,a4paper]{amsart}
\usepackage{enumerate,amsmath,amsthm,amssymb,cite,mathrsfs,graphicx,caption,subfigure}
\usepackage{float}

 \textwidth=455pt \evensidemargin=8pt \oddsidemargin=8pt
\marginparsep=8pt \marginparpush=8pt \textheight=640pt
\topmargin=-20pt

\newcommand{\A}{\mathcal{A}}

\DeclareMathOperator{\RE}{Re} \DeclareMathOperator{\IM}{Im}

\numberwithin{equation}{section}
\newtheorem{theorem}{Theorem}[section]
\newtheorem{lemma}[theorem]{Lemma}
\newtheorem{corollary}[theorem]{Corollary}

\newtheorem{example}[theorem]{Example}
\theoremstyle{remark}

\allowdisplaybreaks

\makeatletter
\@namedef{subjclassname@2020}{\textup{2020} Mathematics Subject Classification}
\makeatother
\begin{document}
\title[Briot--Bouquet differential subordination and Bernardi's integral operator]{Briot--Bouquet differential subordination and Bernardi's integral operator}

\dedicatory{Dedicated to Prof.\ Dato' Indera Rosihan M. Ali}

\author[K. Sharma]{Kanika sharma}
\address{Department of Mathematics, Atma Ram Sanatan Dharma College, University of Delhi, Delhi--110 021, India}
\email{ksharma@arsd.du.ac.in; kanika.divika@gmail.com}

\author{Rasoul Aghalary}
\address{Department of Mathematics, Faculty of Science, Urmia University, Urmia, Iran}
\email{raghalary@yahoo.com; r.aghalary@urmia.ac.ir}

\author{V. Ravichandran}
\address{Department of Mathematics, National Institute of Technology, Tiruchirappalli--620 015, India} \email{ravic@nitt.edu; vravi68@gmail.com}

\begin{abstract}  The conditions on $A$, $B$, $\beta$ and $\gamma$ are obtained for an analytic function $p$ defined on the open unit disc $\mathbb{D}$ and  normalized by $p(0)=1$  to be subordinate to $(1+Az)/(1+Bz)$, $-1\leq B<A \leq 1$ when $p(z)+ zp'(z)/(\beta p(z)+\gamma)$ is subordinate to $e^{z}$. The conditions on these parameters are derived for the function $p$ to be subordinate to $\sqrt{1+z}$ or $e^{z}$ when $p(z)+ zp'(z)/(\beta p(z)+\gamma)$ is subordinate to $(1+Az)/(1+Bz)$. The conditions on $\beta$ and $\gamma$ are determined for the function $p$ to be subordinate to $e^{z}$ when $p(z)+ zp'(z)/(\beta p(z)+\gamma)$ is subordinate to $\sqrt{1+z}$. Related result for the function $p(z)+ zp'(z)/(\beta p(z)+\gamma)$ to be in the parabolic region bounded by the $\RE w=|w-1|$ is investigated. Sufficient conditions for the Bernardi's integral operator to belong to the various subclasses of starlike functions are obtained as applications.

\end{abstract}
\keywords{Starlike functions, Briot--Bouquet differential subordination, Bernardi's integral operator, lemniscate of Bernoulli, parabolic starlike}
\subjclass[2020]{30C80, 30C45}

\maketitle
\section{introduction}
Let $\mathcal{H}$ denote the class of analytic functions in the unit disc $\mathbb{D}$. For a natural number $n$, let $\mathcal{H}[a,n]$ be the subset of $\mathcal{H}$ consisting of functions $p$ of the form $p(z)=a+p_{n}z^{n}+p_{n+1}z^{n+1}+\cdots$. Suppose that $h$ is a univalent function defined on $\mathbb{D}$ with $h(0)=a$ and the function $p \in \mathcal{H}[a,n]$. The Briot--Bouquet differential subordination is the first order differential subordination of the form
\begin{equation}\label{p7eqn1.8}
p(z)+\dfrac{zp'(z)}{\beta p(z)+ \gamma} \prec h(z),
\end{equation}
where $\beta \neq 0, \gamma \in \mathbb{C}$. This particular differential subordination has many interesting applications in the theory of univalent functions. Ruschewyh and Singh \cite{rus1} proved that if the function $p \in \mathcal{H}[1,1]$, $\beta>0, \RE \gamma \geq 0$ and $h(z)=(1+z)/(1-z)$ in ~\eqref{p7eqn1.8} and the function $q \in \mathcal{H}$ satisfy the  differential equation
\[q(z)+\dfrac{zp'(z)}{\beta p(z)+ \gamma}=\frac{1+z}{1-z},\]
then $\min_{|z|=r} \RE p(z) \geq \min_{|z|=r} \RE q(z).$ More related results are proved in \cite{mill1, mill, een}.
For $c>-1$ and $f \in \mathcal{H}[0,1]$, the function $F \in \mathcal{H}[0,1]$ given by Bernardi's integral operator is defined as
\begin{equation}\label{p7eqn2.11}
F(z)=\frac{c +1}{z^{c}}\int^{z}_{0}t^{c-1}f(t)dt.																																	
\end{equation}
There is an important connection between Briot--Bouquet differential equations and the Bernardi's integral operator. If we set $p(z)=zF'(z)/F(z),$ where $F$ is given by  ~\eqref{p7eqn2.11}, then the functions $f$ and $p$ are related through the following Briot--Bouquet differential equation
\[ \frac{zf'(z)}{f(z)}=p(z)+\frac{zp'(z)}{p(z)+c}.\]

Several authors have investigated results on Briot--Bouquet differential subordination. For example, Ali \textit{et al.}\cite{ali4} determined the conditions on $A, B, D$ and $E$ for $p(z)\prec (1+Az)/(1+Bz)$ when $p(z)+ zp'(z)/(\beta p(z)+\gamma)$ is subordinate to $(1+Dz)/(1+Ez)$, $(A,B,D,E \in [-1,1])$. For related results, see \cite{een, mill, mill1, rus1}. Recently, Kumar and Ravichandran ~\cite{sush1} obtained the conditions on $\beta$ so that $p(z)$ is subordinate to $e^{z}$ or $(1+Az)/(1+Bz)$ whenever $1+ \beta p(z)/p'(z)$ is subordinate to $\sqrt{1+z}$ or $(1+Az)/(1+Bz),$ $(-1 \leq B <A \leq 1)$. We investigate generalised problems for regions that were considered recently by many authors. In Section ~\ref{p7sec2}, we find conditions on $\gamma$ and $\beta$ so that $p(z)+ zp'(z)/(\beta p(z)+\gamma)$ is subordinate to $\sqrt{1+z}$ implies $p(z)\prec e^{z}$. Conditions on $A, B, \beta$ and $\gamma$ are also determined so that $p(z)+ zp'(z)/(\beta p(z)+\gamma) \prec(1+Az)/(1+Bz)$ implies $p(z)\prec \sqrt{1+z}$ or $e^{z}$. We determine conditions on $A, B, \beta$ and $\gamma$ so that $p(z)\prec (1+Az)/(1+Bz), (-1\leq B<A\leq1)$ when $p(z)+ zp'(z)/(\beta p(z)+\gamma) \prec e^{z}$ or $\varphi_{PAR}(z)$.
 The function $\varphi_{PAR}:\mathbb{D}\to\mathbb{C}$ is given by
 \begin{equation}\label{p7eqn2.9}
\varphi_{PAR}(z):=1 + \frac{2}{\pi^{2}} \left(\log \frac{1+\sqrt{z}}{1-\sqrt{z}}\right)^{2},\quad \IM\sqrt{z}\geq 0
\end{equation}
and  $\varphi_{PAR}(\mathbb{D})=\left\{w=u+iv:v^{2}<2u-1\right\}=\left\{w:\RE w>|w-1| \right\}=:\Omega_{P}$.
 As an application of our results, we give sufficient conditions for the Bernardi's integral operator to belong to the various subclasses of starlike functions which we define below.

Let $\mathcal{A}$ be the class of all functions $f\in\mathcal{H}$ normalized by the conditions $f(0)=0$ and $f'(0)=1$. Let $\mathcal{S}$ denote the subclass of $\mathcal{A}$ consisting of univalent (one-to-one) functions. For an analytic function $\varphi$ with $\varphi(0)=1$, let
\[ \mathcal{S}^*(\varphi) := \left\{ f\in\mathcal{A}:\frac{					 zf'(z)}{f(z)}\prec\varphi(z)\right\}.\]
This class unifies various classes of starlike functions when $\RE \varphi>0$. Shanmugam \cite{shan} studied the convolution properties of this class when $\varphi$ is convex while Ma and Minda \cite{mamin2} investigated the growth, distortion and coefficient estimates under less restrictive assumption that $\varphi$ is starlike and $\varphi(\mathbb{D})$ is symmetric with respect to the real axis. Notice that, for $-1\leq B<A\leq 1$, the class $\mathcal{S}^*[A,B]:=\mathcal{S}^*((1+Az)/(1+Bz))$ is the class of Janowski starlike functions \cite{jano,pot}. For $0\leq \alpha <1$, the class $\mathcal{S}^*[1-2\alpha, -1]=:\mathcal{S}^*(\alpha)$ is the familiar class of starlike functions of order $\alpha$, introduced by Robertson \cite{rob}. The class $\mathcal{S}^*:=\mathcal{S}^*(0)$ is the class of starlike function. The class $\mathcal{S}_{P}:=\mathcal{S}^*(\varphi_{PAR})$ is the class of parabolic starlike functions, introduced by R{\o}nning \cite{ron}, consists of function $f\in\mathcal{A}$ satisfying
\[\RE\left(\frac{z f'(z)}{f(z)}\right)>\left|\frac{z f'(z)}{f(z)}-1\right|,\quad z\in\mathbb{D}. \]
Sok\'{o}l and Stankiewicz \cite{sokol96} have introduced and studied the class $\mathcal{S}^*_L:=\mathcal{S}^*(\sqrt{1+z});$ the class $\mathcal{S}^*_L$ consists of functions $f\in \mathcal{A}$ such that $zf'(z)/f(z)$ lies in the region bounded by the right-half of the lemniscate of Bernoulli given by $\Omega_{L} :=\left\{w\in\mathbb{C}:|w^{2}-1|<1 \right\}$. Another class $\mathcal{S}^*_e:= \mathcal{S}^*(e^{z})$, introduced recently by Mendiratta \textit{et al.} \cite{men1}, consists of functions $f\in\A$ satisfying the condition $|\log(zf'(z)/f(z))|<1$. There has been several works \cite{jano1,nav,vraviron,vravicmft,sokol09,sokol09b,sharma,sharma1,sharma2,sharma3,sharma4,sharma5,jain} related to these classes.

The following results are required in our investigation.

\begin{lemma}\cite[Theorem 2.1, p.2]{adiba}\label{p7lem2}
	Let $\Omega\subset \mathbb{C}$ and suppose that $\psi: {\mathbb{C}}^{2} \times \mathbb{D} \to \mathbb{C}$ satisfies the condition $\psi(e^{{e}^{it}}, k e^{it}e^{{e}^{it}}; z) \notin \Omega$, where $z \in \mathbb{D}$, $t \in [0, 2\pi]$ and $k \geq 1$. If $p \in \mathcal{H}[1,1]$ and $\psi(p(z),zp'(z);z)\in \Omega$ for $z \in \mathbb{D}$, then $p(z) \prec e^{z}$ in $\mathbb{D}$.
\end{lemma}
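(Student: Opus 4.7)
The plan is to establish this result by the standard Miller--Mocanu machinery, using $q(z)=e^{z}$ as the dominant. First I would verify that $q(z)=e^{z}$ is an admissible dominant: it is entire (hence analytic on $\overline{\mathbb{D}}$), satisfies $q(0)=1=p(0)$, and is univalent on $\mathbb{D}$ since $|\IM z|<1<\pi$ there. Thus $q$ maps $\mathbb{D}$ conformally onto the domain $e^{z}(\mathbb{D})$, and $\partial q(\mathbb{D})=\{e^{\zeta}:\zeta\in\partial\mathbb{D}\}=\{e^{e^{it}}:t\in[0,2\pi]\}$.

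Next I would argue by contradiction. Suppose $p\not\prec e^{z}$ on $\mathbb{D}$. Because $p(0)=1=q(0)$, the fundamental lemma on subordination (Miller--Mocanu, cf.\ \cite{mill1}) guarantees the existence of points $z_{0}\in\mathbb{D}$ and $\zeta_{0}\in\partial\mathbb{D}$ and a real number $k\geq 1$ such that
\[
p(z_{0})=q(\zeta_{0})=e^{\zeta_{0}},\qquad z_{0}p'(z_{0})=k\zeta_{0}\,q'(\zeta_{0})=k\zeta_{0}e^{\zeta_{0}}.
\]
Writing $\zeta_{0}=e^{it}$ for some $t\in[0,2\pi]$, the admissible pair takes the precise form
\[
\bigl(p(z_{0}),\,z_{0}p'(z_{0})\bigr)=\bigl(e^{e^{it}},\,k\,e^{it}e^{e^{it}}\bigr).
\]

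Then I would plug this into $\psi$. By the standing hypothesis $\psi(p(z),zp'(z);z)\in\Omega$ for every $z\in\mathbb{D}$, and in particular
\[
\psi\bigl(e^{e^{it}},\,k\,e^{it}e^{e^{it}};\,z_{0}\bigr)=\psi\bigl(p(z_{0}),z_{0}p'(z_{0});z_{0}\bigr)\in\Omega.
\]
This directly contradicts the assumed admissibility condition $\psi(e^{e^{it}},ke^{it}e^{e^{it}};z)\notin\Omega$ for all $z\in\mathbb{D}$, $t\in[0,2\pi]$, $k\geq1$. Therefore the assumption $p\not\prec e^{z}$ is untenable, and $p(z)\prec e^{z}$ on $\mathbb{D}$, as required.

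The main obstacle here is not computational but conceptual: one must invoke the correct boundary version of Jack's/Miller--Mocanu's lemma that allows the dominant $q$ to be an arbitrary univalent function analytic on $\overline{\mathbb{D}}$ (rather than the identity, as in classical Jack's lemma), and then translate the boundary point $\zeta_{0}\in\partial\mathbb{D}$ into the angular parameter $t$ via $\zeta_{0}=e^{it}$ so that the admissibility hypothesis matches exactly. Once that translation is performed, the contradiction is immediate.
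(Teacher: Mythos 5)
Your argument is correct: the lemma is a special case of the Miller--Mocanu admissibility theory with dominant $q(z)=e^{z}$ (which lies in the class $Q$ since it is entire, univalent on $\overline{\mathbb{D}}$ because $|z_{1}-z_{2}|\leq 2<2\pi$ there, and has nonvanishing derivative on $\partial\mathbb{D}$), and the contradiction you derive via the boundary form of the Miller--Mocanu lemma is exactly how such results are established. Note that the paper itself gives no proof --- it quotes the lemma from the reference \cite{adiba} --- so there is nothing to compare against beyond observing that your route is the standard one; the only detail worth adding is the trivial disposal of the case $p\equiv 1$, which the Miller--Mocanu lemma excludes but for which the conclusion $p\prec e^{z}$ holds anyway.
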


\begin{lemma}\cite[Lemma 1.3, p.28]{rus}\label{p7lem1}
Let $w$ be a meromorphic function in $\mathbb{D}$, $w(0)=0$. If for some $z_{0}\in \mathbb{D}$, $\max_{|z|\leq |z_{0}|}|w(z)|=|w(z_{0})|$, then it follows that $z_{0}w'(z_{0})/w(z_{0})\geq 1.$
\end{lemma}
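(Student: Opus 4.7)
\medskip
\noindent\textbf{Proof plan.} The plan is to reduce this statement to the classical (analytic) version of Jack's lemma by first eliminating the meromorphy, then rescaling to the closed unit disc, and finally extracting the inequality from the two first-order conditions that $|\phi|^{2}$ must satisfy at a boundary maximum, where $\phi(\zeta)=g(\zeta)/\zeta$ for a suitable rescaling $g$ of $w$.

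First I would observe that the hypothesis $\max_{|z|\le|z_{0}|}|w(z)|=|w(z_{0})|<\infty$ forbids $w$ from having any pole in the closed disc $\{|z|\le|z_{0}|\}$: in every neighborhood of a pole $|w|$ is unbounded, so a finite maximum at $z_{0}$ could not hold. Consequently $w$ is analytic in an open neighborhood of $\overline{D(0,|z_{0}|)}$, and one is reduced to a purely analytic situation.

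Next I would rescale: set $g(\zeta):=w(z_{0}\zeta)/w(z_{0})$ for $\zeta\in\overline{\mathbb{D}}$. Then $g$ is analytic in a neighborhood of $\overline{\mathbb{D}}$ with $g(0)=0$, $g(1)=1$, and $|g(\zeta)|\le 1$ throughout $\overline{\mathbb{D}}$ (the last by hypothesis). Schwarz's lemma yields $|g(\zeta)|\le|\zeta|$, so $\phi(\zeta):=g(\zeta)/\zeta$ extends to an analytic function on a neighborhood of $\overline{\mathbb{D}}$ satisfying $|\phi(\zeta)|\le 1$ everywhere and $\phi(1)=1$. The point $\zeta=1$ is therefore a boundary maximum of $|\phi|$.

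The key step is to exploit this boundary maximum. Differentiating $|\phi(re^{i\theta})|^{2}$ in the radial direction at $(r,\theta)=(1,0)$ gives $\RE(\phi'(1)/\phi(1))\ge 0$, since $r\mapsto|\phi(r)|^{2}$ is nondecreasing as $r\nearrow 1$. Differentiating along the boundary circle $r=1$ at $\theta=0$, where $|\phi|$ is also maximal along that circle, gives $\IM(\phi'(1)/\phi(1))=0$. Together, $\phi'(1)/\phi(1)$ is a nonnegative real number. A direct computation from $\phi(\zeta)=g(\zeta)/\zeta$ gives $\phi'(1)/\phi(1)=g'(1)-1=z_{0}w'(z_{0})/w(z_{0})-1$, from which $z_{0}w'(z_{0})/w(z_{0})\ge 1$ follows.

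The main obstacle I anticipate is the last step: articulating cleanly \emph{both} boundary first-order conditions so that together they pin down not just $\RE$ but also $\IM$ of $\phi'(1)/\phi(1)$, yielding a real number rather than merely a complex number with nonnegative real part. Once the meromorphic-to-analytic reduction and the passage to $\phi$ have been handled, everything else is an application of Schwarz's lemma and routine algebraic manipulation.
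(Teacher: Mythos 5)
This lemma is quoted from Ruscheweyh's book and the paper supplies no proof of its own, so there is nothing to compare against; judged on its own terms, your argument is correct and is essentially the classical proof of Jack's lemma (pole removal, rescaling to $g(\zeta)=w(z_0\zeta)/w(z_0)$, Schwarz's lemma to form $\phi=g/\zeta$, and the two first-order conditions at the boundary maximum $\zeta=1$ giving $\RE(\phi'(1)/\phi(1))\ge 0$ and $\IM(\phi'(1)/\phi(1))=0$, hence $g'(1)=z_0w'(z_0)/w(z_0)\ge 1$). Two cosmetic points: the justification of the radial condition should read ``$|\phi(r)|^2\le|\phi(1)|^2$ for $r<1$, so the one-sided derivative at $r=1$ is nonnegative'' rather than ``$r\mapsto|\phi(r)|^2$ is nondecreasing'' (which need not hold along a fixed ray); and one should note at the outset that $w\not\equiv 0$, equivalently $w(z_0)\ne 0$, for the conclusion to be meaningful. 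Neither affects the validity of the proof.
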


\section{Briot--Bouquet differential subordination}\label{p7sec2}

In the first result, we find conditions on the real numbers $\beta$ and $\gamma$ so that $p(z)\prec e^z$, whenever $p(z)+(zp'(z))/(\beta p(z)+\gamma)\prec \sqrt{1+z}$, where $p \in \mathcal{H}$ with $p(0)=1.$ This result gives the sufficient condition for $f \in \mathcal{A}$ to belong to the class $\mathcal{S}^*_e$ by substituting $p(z)=z f'(z)/f(z).$

\begin{theorem}\label{p7thm2.6}
	Let $\beta, \gamma \in \mathbb{R}$ satisfying $\max\{-\gamma/e,-\gamma e+e/(1-\sqrt{2}e)\} \leq \beta \leq -e \gamma$. Let $p \in \mathcal{H}$ with $p(0)=1$. If the function $p$ satisfies
	\[ p(z)+\frac{zp'(z)}{\beta p(z)+\gamma} \prec \sqrt{1+z}, \]
	then $p(z)\prec e^{z}$.
\end{theorem}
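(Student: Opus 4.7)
The plan is to apply the admissibility Lemma~\ref{p7lem2} directly. Set
\[ \psi(r,s;z) = r + \frac{s}{\beta r + \gamma}, \qquad \Omega = \sqrt{1+\mathbb{D}} = \{w : \RE w > 0,\ |w^2-1|<1\}, \]
so that the hypothesis reads $\psi(p(z),zp'(z);z)\in\Omega$. By Lemma~\ref{p7lem2}, the desired subordination $p\prec e^{z}$ will follow once the admissibility condition is verified: for every $t\in[0,2\pi]$ and $k\geq 1$,
\[ w(t,k) := e^{e^{it}} + \frac{k\, e^{it}\, e^{e^{it}}}{\beta\, e^{e^{it}} + \gamma} \notin \Omega, \]
i.e., $|w(t,k)^{2}-1|\geq 1$ or $\RE w(t,k)\leq 0$.

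Before estimating, I would record the basic geometry of the boundary point $u=e^{e^{it}}$: writing $e^{it}=\cos t+i\sin t$, one has $|u|=e^{\cos t}\in[e^{-1},e]$ and $\arg u = \sin t$. A short inspection of the stated interval shows that it is nonempty only when $\gamma\leq 0$, forcing $\beta\geq 0$; the inequality $-\gamma/e\leq\beta$ together with $\beta\leq -e\gamma$ then guarantees $\beta|u|+\gamma$ is bounded away from zero as $u$ traces $|u|\in[e^{-1},e]$, so the denominator $\beta e^{u}+\gamma$ never vanishes and admits a clean lower bound via the triangle inequality. These are precisely the two endpoints $-\gamma/e$ and $-e\gamma$ entering the statement.

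Next I would rewrite the admissibility quantity as $w^{2}-1=(w-1)(w+1)$ with
\[ w-1 = (e^{u}-1)+\frac{k u e^{u}}{\beta e^{u}+\gamma}, \qquad w+1 = (e^{u}+1)+\frac{k u e^{u}}{\beta e^{u}+\gamma}, \]
and observe that $k$ enters only linearly in the last summand. A monotonicity check (or a direct argument that the minimum of $|w^{2}-1|$ over $k\geq 1$ is attained at $k=1$) reduces the problem to the single-variable inequality
\[ \left|\left(e^{e^{it}}+\frac{e^{it}\, e^{e^{it}}}{\beta\, e^{e^{it}}+\gamma}\right)^{2}-1\right|\geq 1\qquad(t\in[0,2\pi]). \]

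The main obstacle is this real-variable inequality. The three numerical thresholds $-\gamma/e$, $-e\gamma$, and $-e\gamma+e/(1-\sqrt{2}\,e)$ are the sharp constants produced by the worst-case values of $t$ (the extremal $t$'s being $t=0,\pi$ together with the point where the unusual expression $1/(1-\sqrt{2}\,e)$ appears, which signals that the lemniscate boundary is met at a specific $|e^{u}|$). I would split into sub-ranges of $\cos t$---for instance $\cos t\geq 0$ versus $\cos t<0$---and in each sub-range combine the triangle inequality with the parameter bounds above to produce a lower bound of $1$ on $|w-1||w+1|$. Once the case analysis is complete, Lemma~\ref{p7lem2} yields $p\prec e^{z}$ immediately. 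The case analysis and the sharpness of the constants are where the real computational weight of the proof lies.
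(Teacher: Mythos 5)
Your setup coincides with the paper's: both invoke Lemma~\ref{p7lem2} with $\psi(r,s;z)=r+s/(\beta r+\gamma)$ and reduce the theorem to the boundary inequality $|\psi(e^{e^{it}},ke^{it}e^{e^{it}};z)^2-1|\geq 1$ for $t\in[-\pi,\pi]$, $k\geq 1$. The problem is that your proof stops exactly where the theorem's content begins. The whole point of the hypotheses $\max\{-\gamma/e,\,-\gamma e+e/(1-\sqrt{2}e)\}\leq\beta\leq-e\gamma$ is that they are what make this inequality true, and you never connect them to it: the plan ``split into sub-ranges of $\cos t$ and use the triangle inequality'' is not carried out, so no part of the estimate is actually established. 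What the paper does is write $|\psi^2-1|^2=f(t)/g(t)$, set $h=f-g$, observe $h$ is even and (after checking that the minimum over $[0,\pi]$ occurs at an endpoint) verify $h(0)\geq0$ and $h(\pi)\geq0$: the condition $\beta\geq-\gamma/e$, i.e.\ $e\beta+\gamma\geq0$, gives $h(0)=(e^2(e\beta+\gamma+k)^2-(e\beta+\gamma)^2)^2-(e\beta+\gamma)^4>0$, while $1/(1-\sqrt{2}e)\leq\gamma+\beta/e\leq0$ (equivalent to $-\gamma e+e/(1-\sqrt2 e)\leq\beta\leq-e\gamma$) gives $h(\pi)\geq0$. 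In particular your guess that the constant $1/(1-\sqrt{2}e)$ arises from a third interior extremal value of $t$ is wrong; it comes from the endpoint $t=\pi$, where $e^{e^{i\pi}}=e^{-1}$ and the factor $\sqrt{2}$ enters through the requirement $((\gamma+\beta/e-k)/e)^2\geq2(\gamma+\beta/e)^2$.

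Two further steps you assert would not survive scrutiny. First, the reduction to $k=1$ uniformly in $t$ is unjustified: for general $t$ the quantity $|\,\psi^2-1|$ depends on $k$ through the complex denominator $\beta e^{e^{it}}+\gamma$, and monotonicity in $k$ is only transparent at $t=0,\pi$ where everything is real; the paper accordingly keeps $k$ as a parameter and uses monotonicity in $k$ only at those two endpoints. Second, the denominator is \emph{not} bounded away from zero on the admissible parameter set: since $e\beta+\gamma\geq0$ at $t=0$ and $\beta/e+\gamma\leq0$ at $t=\pi$, the real quantity $\beta e^{\cos t}+\gamma$ changes sign, and at the boundary case $\beta=-\gamma/e$ the denominator $\beta e^{e^{it}}+\gamma$ vanishes at $t=0$. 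This does not break the argument (the ratio $f/g$ then tends to $+\infty$, which is consistent with $|\psi^2-1|\geq1$), but it does invalidate the ``clean lower bound via the triangle inequality'' you propose to build the estimate on.
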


\begin{proof}
Define the functions $\psi: {\mathbb{C}}^{2} \times \mathbb{D} \to \mathbb{C}$ and $q:\mathbb{D} \to \mathbb{C}$ as follows:
\begin{equation}\label{p7eqn2.56}
	\psi(r,s;z)=r+\frac{s}{\beta r+\gamma}\quad \text{and}\quad q(z)=\sqrt{1+z}
	\end{equation}
so that  $\Omega:=q(\mathbb{D})=\left\{w\in\mathbb{C}:|w^{2}-1|<1 \right\}$ and $\psi(p(z),zp'(z);z)\in \Omega$ for $z \in \mathbb{D}$. To prove $p(z)\prec e^{z}$, we use Lemma ~\ref{p7lem2} so we need to show that $\psi(e^{{e}^{it}}, k e^{it}e^{{e}^{it}}; z) \notin \Omega$ which is equivalent to show that $|(\psi(e^{{e}^{it}}, k e^{it}e^{{e}^{it}}; z))^{2}-1| \geq 1$, where $z \in \mathbb{D}$, $t \in [-\pi, \pi]$ and $k \geq 1$. A simple computation and ~\eqref{p7eqn2.56} yield that
\begin{equation*}
	\psi(e^{{e}^{it}}, k e^{it}e^{{e}^{it}}; z)=e^{e^{i t}}+\frac{k e^{i t}e^{e^{i t}}}{\beta e^{e^{i t}}+\gamma}\quad (-\pi\leq t \leq \pi)
	\end{equation*}
and
\begin{equation}\label{p7eqn2.20}
	|(\psi(e^{{e}^{it}}, k e^{it}e^{{e}^{it}}; z))^{2}-1|^{2}=:\frac{f(t)}{g(t)}\quad (-\pi\leq t \leq \pi),
	\end{equation}
	where
	\begin{equation*}
	\begin{split}
	f(t)=&\big(e^{2 \cos t} \cos(2\sin t)((\gamma +k\cos t+\beta e^{\cos t}\cos(\sin t))^2-(k\sin t+\beta \sin(\sin t)e^{\cos t})^2)\\
	&-2\sin(2\sin t)e^{2\cos t}(k \sin t+\beta \sin(\sin t)e^{\cos t})(\gamma+k\cos t+\beta e^{\cos t}\cos(\sin t))\\
	&+\beta^2 \sin^2(\sin t)e^{2\cos t}-(\gamma+\beta e^{\cos t}\cos (\sin t))^2\big)^2+\big(2 e^{2 \cos t}\cos(2\sin t)(k \sin t\\
	&+\beta \sin (\sin t)e^{\cos t})(\gamma +k \cos t+\beta e^{\cos t}\cos(\sin t))+\sin(2 \sin t)e^{2\cos t}((\gamma+k\cos t\\
	&+\beta e^{\cos t}\cos(\sin t))^2-(k \sin t+\beta \sin(\sin t)e^{\cos t})^2)\\
	&-2\beta \sin(\sin t)e^{\cos t}(\gamma+\beta e^{\cos t}\cos(\sin t))\big)^2
	\end{split}
	\end{equation*}
	and
	\begin{equation*}
	g(t)=(\beta ^2 \sin ^2(\sin t) e^{2 \cos t}+(\gamma +\beta  e^{\cos t} \cos (\sin t))^2)^2.
	\end{equation*}
	Define the function $h:[-\pi,\pi]\to \mathbb{R}$ by $h(t)=f(t)-g(t)$. Since $h(-t)=h(t)$, we restrict to $0\leq t \leq \pi$. It can be easily verified that the function $h$ attains its minimum value either at $t=0$ or $t=\pi$. For $k \geq 1$, we have
	\begin{equation}\label{p7eqn2.21}
	h(0)=(e^2 (e \beta +\gamma +k)^2-(e \beta +\gamma )^2)^2-(e \beta +\gamma )^4
	\end{equation}
	and
	\begin{equation}\label{p7eqn2.22}
	h(\pi)=\Big(\Big(\frac{\beta/e+\gamma -k}{e}\Big)^2-\Big(\frac{\beta}{e}+\gamma \Big)^2\Big)^2-\Big(\frac{\beta }{e}+\gamma \Big)^4.
	\end{equation}
	The given relation $\beta \geq -\gamma/e$ gives $e\beta+\gamma \geq 0$ so that $e(k+e\beta+\gamma)>\sqrt{2}(e\beta+\gamma)$ which implies $e^2(k+e\beta+\gamma)^2-(e\beta+\gamma)^2>(e\beta+\gamma)^2$. Thus, the use of ~\eqref{p7eqn2.21} yields $h(0)>0$.
	
	The given condition $1/(1-\sqrt{2}e)\leq \gamma+\beta/e \leq 0$ leads to $(\gamma+\beta/e)(1-\sqrt{2}e) \leq 1$ which gives that $-k+\gamma+\beta/e \leq -1+\gamma+\beta/e \leq \sqrt{2}e(\gamma+\beta/e)$ which implies $((-k+\gamma+\beta/e)/e)^2 \geq 2(\gamma+\beta/e)^2$ which further implies $((-k+\gamma+\beta/e)/e)^2 -(\gamma+\beta/e)^2 \geq (\gamma+\beta/e)^2$. Hence, by using ~\eqref{p7eqn2.22}, we get that $h(\pi)\geq 0.$ So, $h(t)\geq 0, (0\leq t \leq \pi)$ and thus, ~\eqref{p7eqn2.20} implies $|(\psi(e^{{e}^{it}}, k e^{it}e^{{e}^{it}}; z))^{2}-1| \geq 1$ and therefore $p(z) \prec e^{z}$.
\end{proof}

We will illustrate Theorem ~\ref{p7thm2.6} by the following example:

\begin{example}\label{p7ex2.6}
	By taking $\beta=1$ and $\gamma=c$ $(c>-1)$ in Theorem ~\ref{p7thm2.6}, we get $-1/e+1/(1-\sqrt{2}e)\leq c \leq -1/e$. By taking $\beta=1$, $-1/e+1/(1-\sqrt{2}e)\leq \gamma \leq -1/e$, $n=1$, $h(z)=\sqrt{1+z}$, $a=1$ in \cite[Theorem 3.2d, p.86]{ural}, we get $\RE(a \beta +\gamma)>0$ and $\beta h(z)+\gamma \prec R_{a \beta+\gamma, n}(z)$, where $R_{d,f}(z)$ is the open door mapping given by $R_{d,f}(z):=d(1+z)/(1-z)+(2fz)/(1-z^{2})$. Thus by the use of \cite[Theorem 3.2d, p.86]{ural}, we get
	\[p(z)= -\gamma +\int_0^1 \frac{t^{-\gamma } e^{2 \sqrt{z+1}-2 \sqrt{t z+1}} \left(\sqrt{t z+1}+1\right)^2}{\left(\sqrt{z+1}+1\right)^2} \, dt\]
	which satisy the equation $p(z)+zp'(z)/(\beta p(z)+\gamma)=h(z)$. Then $p(z) \prec e^{z}$.
\end{example}	

Suppose that the function $F$ be given by Bernardi's integral ~\eqref{p7eqn2.11}. Now we discuss the sufficient conditions for the function $F$ to belong to various subclasses of starlike functions. We will illustrate the Theorem ~\ref{p7thm2.6} by the following corollary.

\begin{corollary}\label{p7cor3.5}
	\begin{enumerate}[(i)]
		\item  If the function $f \in \mathcal{S}^*_L$ and the conditions of the Theorem ~\ref{p7thm2.6} hold with $\beta=1$ and $\gamma=c$, then $F \in \mathcal{S}^*_e.$
		\item If the function $f'(z) \prec \sqrt{1+z}$ and the conditions of the Theorem ~\ref{p7thm2.6} hold with $\beta=0$ and $\gamma=c+1$, then  $F'(z)\prec e^z.$
	\end{enumerate}
\end{corollary}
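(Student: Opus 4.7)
The plan is to apply Theorem~\ref{p7thm2.6} twice, each time after recognizing that the Bernardi integral operator converts the hypothesis on $f$ into a Briot--Bouquet subordination for an appropriately chosen analytic function $p$ with $p(0)=1$. All the analytic work is inside Theorem~\ref{p7thm2.6} itself; the corollary is a translation between the operator $F$ and an auxiliary Briot--Bouquet equation.

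For part (i), the natural choice is $p(z) = zF'(z)/F(z)$. Since $F\in\mathcal{H}[0,1]$ with $F(0)=0$ and $F'(0)=1$, the function $p$ is analytic on $\mathbb{D}$ with $p(0)=1$. The Briot--Bouquet identity displayed in the introduction,
\[ \frac{zf'(z)}{f(z)} = p(z) + \frac{zp'(z)}{p(z)+c}, \]
shows that the hypothesis $f\in\mathcal{S}^*_L$, i.e.\ $zf'(z)/f(z)\prec\sqrt{1+z}$, coincides with
\[ p(z) + \frac{zp'(z)}{\beta p(z)+\gamma}\prec\sqrt{1+z} \]
for $\beta=1$ and $\gamma=c$. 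Theorem~\ref{p7thm2.6} then delivers $p(z)\prec e^z$, which is exactly $F\in\mathcal{S}^*_e$.

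For part (ii), I would instead take $p(z) = F'(z)$, so $p(0)=1$. Starting from $z^c F(z) = (c+1)\int_0^z t^{c-1}f(t)\,dt$ implicit in \eqref{p7eqn2.11}, one differentiation gives the standard identity $(c+1)f(z)=cF(z)+zF'(z)$, and differentiating once more and simplifying yields
\[ f'(z) = F'(z) + \frac{zF''(z)}{c+1} = p(z) + \frac{zp'(z)}{\beta p(z)+\gamma} \]
with $\beta=0$, $\gamma=c+1$. The hypothesis $f'(z)\prec\sqrt{1+z}$ then becomes precisely the Briot--Bouquet subordination required by Theorem~\ref{p7thm2.6}, whose conclusion is $F'(z)=p(z)\prec e^z$.

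The main points of care are purely bookkeeping: verifying the normalizations so that $p\in\mathcal{H}$ with $p(0)=1$ (the input class for Lemma~\ref{p7lem2} and hence for Theorem~\ref{p7thm2.6}), being careful to differentiate the Bernardi relation correctly in part (ii), and checking that the choice $\beta=0$, $\gamma=c+1$ is a legitimate specialization in the statement of Theorem~\ref{p7thm2.6} in the sense intended by the phrase ``the conditions of Theorem~\ref{p7thm2.6} hold.'' Once each of these identifications of $p$ is made and the Bernardi operator's differential relation is invoked, the conclusions follow immediately from the previously proved theorem.
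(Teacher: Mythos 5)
Your proposal matches the paper's own proof essentially verbatim: part (i) sets $p(z)=zF'(z)/F(z)$ and uses the differentiated Bernardi relation $(c+1)f(z)=zF'(z)+cF(z)$ to obtain the Briot--Bouquet subordination with $\beta=1$, $\gamma=c$, and part (ii) sets $p(z)=F'(z)$ to get $f'(z)=F'(z)+zF''(z)/(c+1)$ with $\beta=0$, $\gamma=c+1$, then both parts invoke Theorem~\ref{p7thm2.6}. The approach and the choices of $p$ are identical to the paper's, so nothing further is needed.
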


\begin{proof}
$(i)$ Let the function $p:\mathbb{D}\to\mathbb{C}$ be defined by $p(z)=zF'(z)/F(z).$ Then $p$ is analytic in $\mathbb{D}$ with $p(0)=1$. Upon differentiating Bernardi's integral given by ~\eqref{p7eqn2.11}, we obtain
\begin{equation}\label{p7eqn2.12}
(c+1)f(z)=zF'(z)+cF(z).
\end{equation}
A computation  now yields
\[ \frac{zf'(z)}{f(z)}=p(z)+\frac{zp'(z)}{p(z)+c}.\]
By taking $\beta=1$ and $\gamma=c$, the first part of the corollary follows from Theorem ~\ref{p7thm2.6}.

$(ii)$ By defining a function $p$ by  $p(z)=F'(z)$ and using ~\eqref{p7eqn2.12}, we get
\[ f'(z)=\frac{zF''(z)}{c+1}+F'(z). \]
By taking $\beta=0$ and $\gamma=c+1$, the result follows from Theorem ~\ref{p7thm2.6}.
\end{proof}

In the following result, we derive   conditions on the real numbers $A, B$, $\beta$ and $\gamma$ so that $p(z)+(zp'(z))/(\beta p(z)+\gamma) \prec e^{z}$ implies $p(z)\prec (1+Az)/(1+Bz), (-1\leq B<A\leq 1),$ where $p \in \mathcal{H}$ with $p(0)=1.$ This result gives the sufficient condition for $f \in \mathcal{A}$ to belong to the class $\mathcal{S}^*[A,B]$ by substituting $p(z)=z f'(z)/f(z).$

\begin{theorem}\label{p7thm2.2}
	Let $-1< B<A\leq 1$ and $\beta, \gamma \in \mathbb{R}$. Suppose that
	\begin{enumerate}[(i)]
		\item $\big(A-B\big)/\big((1\mp B)((1\mp A)\beta+(1 \mp B)\gamma)\big)\geq \pm (1 \mp A)/(1\mp B) + e.$
		\item $\beta(1\pm A)+\gamma(1 \pm B)>0$.
	\end{enumerate}
	Let $p \in \mathcal{H}$ with $p(0)=1$. If the function $p$ satisfies
	\[ p(z)+\frac{zp'(z)}{\beta p(z)+\gamma} \prec e^{z}, \]
	then $p(z)\prec (1+Az)/(1+Bz)$.
\end{theorem}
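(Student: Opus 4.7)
The plan is to argue by contradiction along lines parallel to the proof of Theorem~\ref{p7thm2.6}, but with target function $q(z):=(1+Az)/(1+Bz)$ in place of $e^z$. Since Lemma~\ref{p7lem2} is tailored to $e^z(\mathbb{D})$, I would instead invoke Lemma~\ref{p7lem1} (Jack's lemma) applied to the Schwarz function $w$ determined implicitly by $p(z)=(1+Aw(z))/(1+Bw(z))$, $w(0)=0$. Failure of the conclusion forces $|w|$ to attain the value $1$ at some first point $z_0\in\mathbb{D}$, at which Lemma~\ref{p7lem1} yields $z_0w'(z_0)=kw(z_0)$ for some $k\geq 1$. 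Setting $w(z_0)=e^{i\theta}$ and differentiating, one obtains
\[
p(z_0)=\frac{1+Ae^{i\theta}}{1+Be^{i\theta}},\qquad z_0p'(z_0)=\frac{k(A-B)e^{i\theta}}{(1+Be^{i\theta})^2},
\]
and hence
\[
\Psi(\theta,k):=p(z_0)+\frac{z_0p'(z_0)}{\beta p(z_0)+\gamma}=\frac{1+Ae^{i\theta}}{1+Be^{i\theta}}+\frac{k(A-B)e^{i\theta}}{(1+Be^{i\theta})\bigl[(\beta+\gamma)+(A\beta+B\gamma)e^{i\theta}\bigr]}.
\]
The hypothesis $\psi(p,zp';z)\prec e^z$ forces $\Psi(\theta,k)\in e^z(\mathbb{D})\subset\{1/e<|w|<e\}$, so the contradiction will be achieved once I prove $|\Psi(\theta,k)|\geq e$.

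The reduction passes through two real-axis evaluations. At $\theta=0$, $\Psi$ is real and equals $(1+A)/(1+B)+k(A-B)/[(1+B)((1+A)\beta+(1+B)\gamma)]$; hypothesis (ii) with the upper sign makes the denominator positive, and hypothesis (i) with the lower sign combined with $k\geq 1$ yields $\Psi(0,k)\geq e$. At $\theta=\pi$, $\Psi$ is real and equals $(1-A)/(1-B)-k(A-B)/[(1-B)((1-A)\beta+(1-B)\gamma)]$; hypothesis (ii) with the lower sign makes the relevant denominator positive, and hypothesis (i) with the upper sign combined with $k\geq 1$ yields $\Psi(\pi,k)\leq -e$. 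In both cases $|\Psi|\geq e$, so the two sign choices in (i) and (ii) are precisely what is needed at the two extreme points.

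The principal obstacle is upgrading these endpoint bounds to the estimate $|\Psi(\theta,k)|\geq e$ for every $\theta\in[-\pi,\pi]$. Following the template of the function $h(t)$ used in the proof of Theorem~\ref{p7thm2.6}, I would introduce
\[
H(\theta):=\bigl(|\Psi(\theta,k)|^2-e^2\bigr)\bigl|(1+Be^{i\theta})\bigl[(\beta+\gamma)+(A\beta+B\gamma)e^{i\theta}\bigr]\bigr|^2,
\]
expand it as a real trigonometric polynomial in $\cos\theta$ and $\sin\theta$, verify the parity $H(-\theta)=H(\theta)$ so that only $\theta\in[0,\pi]$ matters, and argue that the minimum of $H$ on $[0,\pi]$ is attained at an endpoint. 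This monotonicity/endpoint analysis, rather than the algebraic set-up, is where the real work lies; combined with the two endpoint inequalities (and the positivity of denominators furnished by (ii)) it delivers the contradiction and establishes $p\prec(1+Az)/(1+Bz)$.
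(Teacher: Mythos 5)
The set-up of your argument coincides with the paper's: the same Schwarz function $w$ with $p=(1+Aw)/(1+Bw)$, the same appeal to Lemma~\ref{p7lem1}, the same boundary value $\Psi(\theta,k)=P(z_0)$, and your two endpoint evaluations at $\theta=0$ and $\theta=\pi$ (with the two sign choices in (i)--(ii)) are exactly the paper's computations of $\psi(k)\geq e$ and $\phi(k)\geq e$. The gap is in the exterior criterion you adopt for the region $e^{z}(\mathbb{D})$. You propose to reach the contradiction by proving $|\Psi(\theta,k)|\geq e$ for all $\theta$, on the grounds that $e^{z}(\mathbb{D})\subset\{1/e<|w|<e\}$. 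That implication is logically sound, but the inequality $|\Psi(\theta,k)|\geq e$ is simply false at interior values of $\theta$ under the stated hypotheses, so the quantity $H(\theta)$ you plan to analyse is negative there and no endpoint-minimum argument can succeed. Concretely, take $A=1/2$, $B=-1/2$, $\beta=1$, $\gamma=-1/4$ (all of (i)--(ii) hold: the two denominators are $11/8$ and $1/8$, and the two instances of (i) read $16/11\geq e-3$ and $16/3\geq 1/3+e$). With $k=1$ and $\theta=\pi/2$ one finds $\Psi=(247+516i)/305\approx 0.81+1.69i$, so $|\Psi|\approx 1.88<e$. The point is still outside $e^{z}(\mathbb{D})$, but only because its argument is large: $(\log|\Psi|)^2+(\arg\Psi)^2\approx 0.40+1.26>1$.

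This is precisely why the paper works with the exact non-membership condition $|\log P(z_0)|\geq 1$, i.e.\ $4(\arg P(z_0))^2+(\log|P(z_0)|^2)^2\geq 4$, rather than a pure modulus bound: at $\theta=0,\pi$ the value is real and the condition degenerates to $|P(z_0)|\geq e$ (or $\leq 1/e$), which is where your endpoint computations are recovered, but for intermediate $\theta$ the argument term carries the estimate. To repair your proof you must replace $H(\theta)$ by the paper's $f(t)=4(\arg(u+iv))^2+(\log(u^2+v^2))^2-4$ and show that \emph{this} function has its minimum on $[0,\pi]$ at an endpoint; the modulus-only functional cannot be salvaged.
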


\begin{proof}
	Define the functions $P$ and $w$ as follows:
	\begin{equation}\label{p7eqn2.4}
	P(z)=p(z)+\frac{zp'(z)}{\beta p(z)+\gamma}\quad \text{and}\quad w(z)=\frac{p(z)-1}{A-Bp(z)}
	\end{equation}
	so that  $p(z)=(1+Aw(z))/(1+Bw(z)).$ Clearly, $w(z)$ is analytic in $\mathbb{D}$ with $w(0)=0$. In order to prove $p(z)\prec (1+Az)/(1+Bz)$, we need to show that $|w(z)|<1$ in $\mathbb{D}$. If possible, suppose that there exists $z_{0}\in \mathbb{D}$ such that
	\[\max_{|z|\leq |z_{0}|}|w(z)|=|w(z_{0})|=1,\]
	then by Lemma ~\ref{p7lem1}, it follows that there exists $k\geq 1$ so that $z_{0}w'(z_{0})=kw(z_{0}).$ Let $w(z_{0})=e^{it}$, $(-\pi\leq t \leq \pi)$ and $G:=A\beta +B \gamma$. A simple calculation and by using ~\eqref{p7eqn2.4}, we get
	\begin{equation}\label{p7eqn2.5}
	P(z_{0})=\frac{k e^{i t} (A-B)+\left(1+A e^{i t}\right) \left(\beta +\gamma +G e^{i t}\right)}{\left(1+B e^{i t}\right) \left(\beta +\gamma +G e^{i t}\right)}=:u+iv \quad (-\pi \leq t \leq \pi).
	\end{equation}
We derive  a contradiction by  showing  $|\log P(z_{0})|^{2}\geq 1$. This later inequality is equivalent to
	\begin{equation}\label{p7eqn2.6}
	f(t):=4 (\arg (u+i v))^2+(\log \left(u^2+v^2\right))^2-4 \geq 0 \quad (-\pi \leq t \leq \pi).
	\end{equation}
	From ~\eqref{p7eqn2.5}, we get
	\begin{equation*}
	\begin{split}
	u&=\frac{1}{\left(B^2+2 B \cos t+1\right) \left((\beta +\gamma )^2+G^2+2 G (\beta +\gamma ) \cos t\right)}\big(G (A+B) (\beta +\gamma ) \cos 2t\\
	&\quad{}+\cos t \big(A (B G (2 (\beta +\gamma )+k)+G^2+(\beta +\gamma ) (\beta +\gamma +k))-B^2 G k +2 G (\beta +\gamma )\\
	&\quad{}+B\left(G^2-(\beta +\gamma ) (-\beta -\gamma +k)\right)\big)+(\beta +\gamma ) \left(A B (\beta +\gamma +k)+\beta -B^2 k+\gamma \right)\\
	&\quad{}+G^2 (A B+1)+G (A (\beta +\gamma +k)+B (\beta +\gamma -k))\big)
	\end{split}
	\end{equation*}
	and
	\[v=\frac{(A-B) \sin t \left(-B G k+G^2+2 G (\beta +\gamma ) \cos t+(\beta +\gamma ) (\beta +\gamma +k)\right)}{\left(B^2+2 B \cos t+1\right) \left((\beta +\gamma )^2+G^2+2 G (\beta +\gamma ) \cos t\right)}.\]
	Substituting these values of $u$ and $v$ in ~\eqref{p7eqn2.6}, we observe that $f(t)$ is an even function of $t$ and so, it is enough to show that $f(t) \geq 0$ for $t\in[0,\pi]$. It can be easily verified that the function $f(t)$ attains its minimum value either at $t=0$ or $t=\pi$. We show that both $f(0)$ and $f(\pi)$ are non negative. Note that, for $k \geq 1$,
	\begin{equation}\label{p7eqn2.7}
	f(0)=-4+4(\arg \psi(k))^2 +(\log( \psi^{2}(k)))^2
	\end{equation}
	and
	\begin{equation}\label{p7eqn2.8}
	f(\pi)=-4+4(\arg (-\phi(k)))^2 +(\log( \phi^{2}(k)))^2,
	\end{equation}
	where $\psi(k):=\big(A^2 \beta +A (2 \beta +B \gamma +\gamma +k)+\beta +B (\gamma -k)+\gamma\big)/\big((1+B) (\beta(1+A) + \gamma(1 +B ))\big)$ and $\phi(k):=\big(A^2 \beta -2 A \beta +(A-1) (B-1) \gamma -A k+\beta +B k\big)/\big((B-1) (-A \beta +\beta -B \gamma +\gamma )\big)$. The function $\psi$ is increasing as $\psi'(k)=(A-B)/\big((1+B) (\beta(1+A) + \gamma(1 +B ))\big) >0$ using the given condition $(ii)$ and therefore, the given hypothesis $(i)$ yields that $\psi(k)\geq \psi(1)=(1+A)/(1+B) + (A-B)/\big((1+B) (\beta(1+A) + \gamma(1 +B ))\big) \geq e$ which gives that $\arg \psi(k)=0$ and $(\log( \psi^{2}(k)))^2\geq (2\log e)^{2}=4.$ Thus, the use of ~\eqref{p7eqn2.7} yields $f(0)\geq 0.$
	
	The function $\phi$ is increasing as $\phi'(k)=(A-B)/\big((1-B) (\beta(1-A) + \gamma(1- B ))\big) >0$ using the given condition $(ii)$ and therefore, the given hypothesis $(i)$ yields that $\phi(k)\geq \phi(1)=-(1-A)/(1-B) + (A-B)/\big((1-B) (\beta(1-A) + \gamma(1 -B ))\big) \geq e$ which further implies $\arg (-\phi(k))=\pi$ and $(\log( \phi^{2}(k)))^2\geq (2\log e)^{2}=4.$ Hence, by using ~\eqref{p7eqn2.8}, we get $f(\pi)\geq 4\pi^{2}>0.$ This completes the proof.
\end{proof}

We will illustrate Theorem ~\ref{p7thm2.2} by the following example:

\begin{example}\label{p7ex2.2}
	By taking $A=1/2$, $B=-1/2$, $\beta=1$ and $\gamma=c$ $(c>-1)$ in Theorem ~\ref{p7thm2.2}, we get $-1/3 \leq c \leq (1-e)/(1+3e)$. By taking $\beta=1$, $-1/3 \leq \gamma \leq (1-e)/(1+3e)$, $n=1$, $h(z)=e^{z}$, $a=1$ in \cite[Theorem 3.2d, p.86]{ural}, we get $\RE(a \beta +\gamma)>0$ and $\beta h(z)+\gamma \prec R_{a \beta+\gamma, n}(z)$, where $R_{d,f}(z)$ is the open door mapping given by $R_{d,f}(z):=d(1+z)/(1-z)+(2fz)/(1-z^{2})$. Thus by the use of \cite[Theorem 3.2d, p.86]{ural}, we get
	\[p(z)= \int_0^1 t^{1-\gamma } e^{-\text{Chi}(t z)+\text{Chi}(z)-\text{Shi}(t z)+\text{Shi}(z)} \, dt-\gamma\]
	which satisy the equation $p(z)+zp'(z)/(\beta p(z)+\gamma)=h(z)$. Then $p(z) \prec (2+z)/(2-z)$. Here, $Chi(z)$ and $Shi(z)$ are the hyperbolic cosine integral function and the hyperbolic sine integral function respectively defined as follows:
	\[ Chi(z)= \eta + \log (z) + \int_0^z \frac{\cosh(t)-1}{t} dt \quad \text{and} \quad Shi(z)=\int_0^z \frac{\sinh(t)}{t} dt, \]
	where $\eta$ is the Euler's constant.
\end{example}

The next corollary is obtained by substituting $p(z)=zf'(z)/f(z)$ with $\gamma=0$, $B=0$ and $A=1-\alpha$, $(0 \leq \alpha<1)$ in Theorem ~\ref{p7thm2.2}.

\begin{corollary}\label{p7cor3.12}
Let $0 \leq \alpha<1$ and $\beta>0$ satisfy the conditions $\alpha+e+\beta^{-1} \leq (\alpha \beta)^{-1}$ and $1-\alpha \geq \beta(2-\alpha)(e-2+\alpha)$. If the function $f \in \mathcal{A}$ satisfies the subordination
\[\frac{z f'(z)}{f(z)} + \frac{1}{\beta}\left(1+\frac{zf''(z)}{f'(z)}-\frac{zf'(z)}{f(z)}\right)\prec e^{z},\]
then $f \in \mathcal{S}^{*}_{\alpha}.$
\end{corollary}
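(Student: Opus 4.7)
The plan is to invoke Theorem~\ref{p7thm2.2} directly, using the parameter substitution indicated just before the corollary: set $p(z)=zf'(z)/f(z)$, together with $\gamma=0$, $B=0$, and $A=1-\alpha$. First I would verify that the left-hand side of the given subordination matches the Briot--Bouquet operator $p(z)+zp'(z)/(\beta p(z)+\gamma)$. Logarithmic differentiation of $p(z)=zf'(z)/f(z)$ yields $zp'(z)/p(z)=1+zf''(z)/f'(z)-zf'(z)/f(z)$, and with $\gamma=0$ the operator becomes $zf'(z)/f(z)+\beta^{-1}(1+zf''(z)/f'(z)-zf'(z)/f(z))$, which is exactly the expression appearing in the hypothesis. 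Thus the given subordination reads $p(z)+zp'(z)/(\beta p(z)+\gamma)\prec e^{z}$.

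Next I would translate the hypotheses (i)--(ii) of Theorem~\ref{p7thm2.2} into the two numerical inequalities asserted in the corollary. For hypothesis (ii), the upper-sign form is $\beta(1+A)+\gamma(1+B)=\beta(2-\alpha)>0$ and the lower-sign form is $\beta(1-A)+\gamma(1-B)=\alpha\beta\geq 0$; both hold because $\beta>0$ and $0\leq\alpha<1$. For hypothesis (i) with the upper sign, substitution gives $(1-\alpha)/(\alpha\beta)\geq\alpha+e$, which upon dividing by $\alpha\beta$ is precisely the first given inequality $\alpha+e+\beta^{-1}\leq(\alpha\beta)^{-1}$. With the lower sign, hypothesis (i) reduces to $(1-\alpha)/((2-\alpha)\beta)\geq e-2+\alpha$, and clearing the denominator gives the second assumption $1-\alpha\geq\beta(2-\alpha)(e-2+\alpha)$.

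With the hypotheses of Theorem~\ref{p7thm2.2} verified, its conclusion yields $p(z)\prec (1+(1-\alpha)z)/(1+0\cdot z)=1+(1-\alpha)z$. Since the image of $\D$ under $1+(1-\alpha)z$ is the open disc of radius $1-\alpha$ centred at $1$, we deduce $\RE p(z)>\alpha$, i.e.\ $\RE(zf'(z)/f(z))>\alpha$ for $z\in\D$, so $f\in\mathcal{S}^{*}(\alpha)$. There is no analytic obstacle in this argument; the only nontrivial bookkeeping is identifying the two stated inequalities with the two sign choices in Theorem~\ref{p7thm2.2}(i), and confirming that hypothesis (ii) is automatic from $\beta>0$.
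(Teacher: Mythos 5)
Your proposal is correct and follows exactly the route the paper intends: the paper offers no separate proof, only the remark that the corollary follows from Theorem~\ref{p7thm2.2} via $p(z)=zf'(z)/f(z)$, $\gamma=0$, $B=0$, $A=1-\alpha$, and your verification of conditions (i)--(ii) under this substitution matches the two stated inequalities. (The only hair to split is that at $\alpha=0$ condition (ii) of Theorem~\ref{p7thm2.2} with the lower sign gives $\alpha\beta=0$ rather than $>0$, but this borderline case is already implicit in the paper's own statement, where $(\alpha\beta)^{-1}$ is undefined.)
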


Our next corollary deals with the class $ \mathcal{R}[A,B]$ defined by
\[ \mathcal{R}[A,B]=\left\{ f\in\mathcal{A}: f'(z)\prec \frac{1+Az}{1+Bz}\right\}.\]
The two parts of the following corollary are obtained by taking $p(z)=zF'(z)/F(z)$ with $\beta=1$, $\gamma=c$ and $p(z)=F'(z)$ with $\beta=0$, $\gamma=c+1$ respectively in Theorem ~\ref{p7thm2.2}.

\begin{corollary}\label{p7cor3.5}
	\begin{enumerate}[(i)]
		\item  If the function $f \in \mathcal{S}^*_e$ and the conditions of the Theorem ~\ref{p7thm2.2} hold with $\beta=1$ and $\gamma=c$, then $F \in \mathcal{S}^*[A,B].$
		\item The function $f'(z) \prec e^{z}$ and the conditions of the Theorem ~\ref{p7thm2.2} hold with $\beta=0$ and $\gamma=c+1$, then  $F\in \mathcal{R}[A,B].$
	\end{enumerate}
\end{corollary}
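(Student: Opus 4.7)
The plan is to deduce both parts of this corollary as direct applications of Theorem~\ref{p7thm2.2}, exploiting the Briot--Bouquet identity that relates Bernardi's integral operator to its input function, exactly in the spirit of the earlier Corollary~\ref{p7cor3.5} (the one tied to Theorem~\ref{p7thm2.6}). The preliminary step for both parts is to record the identity obtained by differentiating \eqref{p7eqn2.11}, namely $(c+1)f(z)=zF'(z)+cF(z)$, which is already displayed in \eqref{p7eqn2.12}; from here the two parts require only one more differentiation each.

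For part~(i), I would set $p(z)=zF'(z)/F(z)$, which is analytic in $\mathbb{D}$ with $p(0)=1$ since $F\in\mathcal{A}$. Logarithmic differentiation of \eqref{p7eqn2.12} (or equivalently the computation already carried out in the introduction between \eqref{p7eqn2.11} and \eqref{p7eqn2.12}) gives
\[
\frac{zf'(z)}{f(z)} \;=\; p(z)+\frac{zp'(z)}{p(z)+c}.
\]
The hypothesis $f\in\mathcal{S}^{*}_{e}$ is precisely $zf'(z)/f(z)\prec e^{z}$, so $p(z)+zp'(z)/(p(z)+c)\prec e^{z}$. Since the conditions of Theorem~\ref{p7thm2.2} are assumed to hold with $\beta=1$ and $\gamma=c$, that theorem gives $p(z)\prec(1+Az)/(1+Bz)$, i.e.\ $F\in\mathcal{S}^{*}[A,B]$.

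For part~(ii), I would set $p(z)=F'(z)$, so $p(0)=F'(0)=1$. Differentiating \eqref{p7eqn2.12} yields $(c+1)f'(z)=F'(z)+zF''(z)+cF'(z)$, hence
\[
f'(z)\;=\;F'(z)+\frac{zF''(z)}{c+1}\;=\;p(z)+\frac{zp'(z)}{c+1}\;=\;p(z)+\frac{zp'(z)}{\beta p(z)+\gamma}
\]
with $\beta=0$ and $\gamma=c+1$. The hypothesis $f'(z)\prec e^{z}$ therefore says that $p(z)+zp'(z)/(\beta p(z)+\gamma)\prec e^{z}$, and Theorem~\ref{p7thm2.2} with these choices of parameters gives $p(z)=F'(z)\prec(1+Az)/(1+Bz)$, which is exactly the statement $F\in\mathcal{R}[A,B]$.

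There is essentially no obstacle here: the only point that deserves a brief remark is that in part~(ii) the parameter $\beta$ is zero, so the expression $\beta p(z)+\gamma$ collapses to the constant $c+1>0$, and the Briot--Bouquet form \eqref{p7eqn1.8} together with Theorem~\ref{p7thm2.2} still applies with $\gamma=c+1$ (in particular condition~(ii) of Theorem~\ref{p7thm2.2} becomes $\gamma(1\pm B)>0$, which is automatically satisfied for $c>-1$ and $-1<B<A\leq 1$). Once this is noted, both statements follow immediately from Theorem~\ref{p7thm2.2} by the substitutions $p(z)=zF'(z)/F(z)$ and $p(z)=F'(z)$ respectively.
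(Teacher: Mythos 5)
Your proposal is correct and matches the paper's approach exactly: the paper obtains both parts by the same substitutions $p(z)=zF'(z)/F(z)$ with $\beta=1$, $\gamma=c$ and $p(z)=F'(z)$ with $\beta=0$, $\gamma=c+1$ in Theorem~\ref{p7thm2.2}, using the identity $(c+1)f(z)=zF'(z)+cF(z)$ from \eqref{p7eqn2.12} just as in the proof of the earlier corollary attached to Theorem~\ref{p7thm2.6}. Your added remark that $\beta=0$ causes no difficulty since the denominator collapses to the constant $c+1>0$ is a sensible clarification but not a departure from the paper.
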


In the next result, we find the conditions on the real numbers $A, B$, $\beta$ and $\gamma$ so that $p(z)\prec \sqrt{1+z}$, whenever $p(z)+(zp'(z))/(\beta p(z)+\gamma) \prec (1+Az)/(1+Bz)$, $-1\leq B<A\leq 1$, where $p \in \mathcal{H}$ with $p(0)=1.$ As an application of the next result, it provides sufficient conditions for $f \in \mathcal{A}$ to belong to the class $\mathcal{S}^*_L$.

\begin{theorem}\label{p7thm2.4}
	Let $-1\leq B<A\leq 1$ and $\beta, \gamma \in \mathbb{R}$ satisfy the following conditions:
	\begin{enumerate}[(i)]
		\item $ 1+4(\sqrt{2}-1)\beta-2(\sqrt{2}-2)\gamma \geq B(-2A(2 \beta+\sqrt{2}\gamma)+B(1+4(\sqrt{2}\beta+\gamma)))$.
		\item $(1+4(\sqrt{2}-1)\beta-2(\sqrt{2}-2)\gamma)^{2}\geq (-2A(2 \beta+\sqrt{2}\gamma)+B(1+4(\sqrt{2}\beta+\gamma)))^{2} .$
	\end{enumerate}
	Let $p \in \mathcal{H}$ with $p(0)=1$. If the function $p$ satisfies
	\[ p(z)+\frac{zp'(z)}{\beta p(z)+\gamma} \prec \frac{1+Az}{1+Bz}, \]
	then $p(z)\prec \sqrt{1+z}$.
\end{theorem}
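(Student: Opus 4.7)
The proof would adapt the template of Theorem~\ref{p7thm2.2} to the new target subordinant $\sqrt{1+z}$. I would argue by contradiction and suppose $p(z)\not\prec\sqrt{1+z}$; equivalently, the analytic function $w(z):=p(z)^{2}-1$ (which satisfies $w(0)=0$) fails $|w(z)|<1$ throughout $\mathbb{D}$. A standard argument produces $z_{0}\in\mathbb{D}$ with $\max_{|z|\leq|z_{0}|}|w(z)|=|w(z_{0})|=1$, and Lemma~\ref{p7lem1} gives some $k\geq 1$ with $z_{0}w'(z_{0})=kw(z_{0})$. Writing $w(z_{0})=e^{it}$, I obtain $p(z_{0})=\sqrt{1+e^{it}}$ (principal branch) and, from $w'=2pp'$, $z_{0}p'(z_{0})=ke^{it}/(2\sqrt{1+e^{it}})$. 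The degenerate value $t=\pm\pi$ is automatically excluded because $p(z_{0})=0$ would force $w'(z_{0})=0$ and thus $z_{0}w'(z_{0})=0\neq kw(z_{0})$.

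Substituting these data into $P(z_{0}):=p(z_{0})+z_{0}p'(z_{0})/(\beta p(z_{0})+\gamma)$ produces an explicit expression in $t$ and $k$. The subordination hypothesis requires $P(z_{0})$ to lie in the image disc $\{(1+Aw)/(1+Bw):w\in\mathbb{D}\}$, equivalently $|P(z_{0})-1|<|A-BP(z_{0})|$; so a contradiction is reached once I establish
\[
h(t,k):=|P(z_{0})-1|^{2}-|A-BP(z_{0})|^{2}\geq 0 \qquad (t\in(-\pi,\pi),\ k\geq 1).
\]
Using $1+e^{it}=2\cos(t/2)e^{it/2}$, one verifies $h(-t,k)=h(t,k)$, so I restrict to $t\in[0,\pi)$. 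Following the calculus strategy used in the proofs of Theorems~\ref{p7thm2.6} and~\ref{p7thm2.2}, a direct differentiation argument shows that $h(\cdot,k)$ on $[0,\pi)$ attains its minimum at $t=0$ (noting $h(t,k)\to+\infty$ as $t\to\pi^{-}$ because $P(z_{0})\to\infty$ there) and that $k\mapsto h(0,k)$ is minimized at $k=1$.

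Once the reduction is in place, it suffices to check $h(0,1)\geq 0$. At $(t,k)=(0,1)$ we have $p(z_{0})=\sqrt{2}$, and elementary algebra gives
\[
(P(z_{0})-1)(4\beta+2\sqrt{2}\gamma)=1+4(\sqrt{2}-1)\beta-2(\sqrt{2}-2)\gamma,
\]
\[
(A-BP(z_{0}))(4\beta+2\sqrt{2}\gamma)=-\bigl(-2A(2\beta+\sqrt{2}\gamma)+B(1+4(\sqrt{2}\beta+\gamma))\bigr).
\]
Hypothesis~(ii) is then exactly $(P(z_{0})-1)^{2}\geq (A-BP(z_{0}))^{2}$, i.e.\ $h(0,1)\geq 0$, while hypothesis~(i) handles the sign-bookkeeping needed to carry the monotonicity in $k$ through (it controls which of $\pm(A-BP(z_{0}))$ is dominated by $P(z_{0})-1$). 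The main obstacle is precisely this calculus reduction to the corner $(0,1)$: after the branch identity $\sqrt{1+e^{it}}=\sqrt{2\cos(t/2)}\,e^{it/4}$, the real and imaginary parts of $P(z_{0})$ become cumbersome trigonometric expressions in $t$, and verifying the extremal behaviour in $t$ (and linear growth in $k$) requires careful bookkeeping analogous to, but more involved than, the analysis in Theorem~\ref{p7thm2.6}.
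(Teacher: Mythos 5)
Your proposal is correct and follows essentially the same route as the paper: the substitution $w=p^{2}-1$ with Lemma~\ref{p7lem1}, the same boundary expression for $P(z_{0})$, the reduction of $|(P(z_{0})-1)/(A-BP(z_{0}))|\geq 1$ to a trigonometric inequality whose minimum over $t$ is asserted (in the paper as well, without detailed verification) to occur at the endpoints, and the same convexity-in-$k$ argument in which hypothesis (i) is exactly $S'(1)\geq 0$ and hypothesis (ii) is exactly $S(1)\geq 0$. The only cosmetic difference is that you exclude $t=\pm\pi$ outright via $w'=2pp'$, whereas the paper keeps that endpoint and evaluates $h(\pi)=(1-B^{2})k^{2}\geq 0$ directly.
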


\begin{proof}
	Define the functions $P$ and $w$ as follows:
	\begin{equation}\label{p7eqn2.13}
	P(z)=p(z)+\frac{zp'(z)}{\beta p(z)+\gamma}\quad \text{and}\quad w(z)=p^{2}(z)-1
	\end{equation}
	which implies $p(z)=\sqrt{1+w(z)}.$ Clearly, $w(z)$ is analytic in $\mathbb{D}$ with $w(0)=0$. In order to complete our proof, we need to show that $|w(z)|<1$ in $\mathbb{D}$. Assume that there exists $z_{0}\in \mathbb{D}$ such that
	\[\max_{|z|\leq |z_{0}|}|w(z)|=|w(z_{0})|=1,\]
	then by Lemma ~\ref{p7lem1}, it follows that there exists $k\geq 1$ so that $z_{0}w'(z_{0})=kw(z_{0}).$ Let $w(z_{0})=e^{it}$, $(-\pi\leq t \leq \pi)$. By using ~\eqref{p7eqn2.13}, we get
	\[P(z)=\sqrt{1+w(z)}+\frac{zw'(z)}{2\sqrt{1+w(z)}(\beta\sqrt{1+w(z)}+\gamma)}.\]
	A simple computation shows that
	\[P(z_{0})=\frac{k e^{i t}+2 \left(1+e^{i t}\right) \left(\gamma +\beta  \sqrt{1+e^{i t}}\right)}{2 \sqrt{1+e^{i t}} \left(\gamma +\beta  \sqrt{1+e^{i t}}\right)}\quad (-\pi\leq t \leq \pi)\]
and
	\begin{equation}\label{p7eqn2.14}
	\left|\frac{P(z_{0})-1}{A-BP(z_{0})}\right|^{2}=:\frac{f(t)}{g(t)}\quad (-\pi\leq t \leq \pi),
	\end{equation}
	where
	\begin{equation*}
	\begin{split}
	f(t)=&\big((2 \beta  \cos t+2 (\beta -\gamma )) \sin( \arg(1+e^{i t})/2) \sqrt{2 \cos (t/2)} \\
	&+\sin t (k+2 (\gamma +\beta  (-1+\cos ( \arg (1+e^{i t})/2) \sqrt{2 \cos (t/2)})))\big)^2\\
	&+\big(-\cos t (k+2 (\gamma +\beta  (-1+\cos ( \arg (1+e^{i t})/2) \sqrt{2 \cos (t/2)})))+2 \beta  \sin t\\
	& \sin (\arg(1+e^{i t})/2) \sqrt{2 \cos (t/2)}+2(\beta -\gamma ) (1-\cos (\arg (1+e^{i t})/2) \sqrt{2 \cos (t/2)})\big)^2 \\
	\end{split}
	\end{equation*}
	and
	\begin{equation*}
	\begin{split}
	g(t)=&\big(-2A (\beta  \sin t +\gamma \sin( \arg(1+e^{i t})/2) \sqrt{2 \cos (t/2)})+4B\beta \cos^{2}(t/2)\sqrt{2 \cos (t/2)} \\
	&\sin( \arg (1+e^{i t})/2)+B \sin t (k+2 \gamma +2\beta  \cos ( \arg (1+e^{i t})/2) \sqrt{2 \cos (t/2)})\big)^2\\
	&+\big(-4A \beta  \cos^{2}(t/2)+B(k+2 \gamma)\cos t + 2\gamma-2B\beta \sin t\sin( \arg (1+e^{i t})/2) \\
	&\sqrt{2 \cos (t/2)}+2(-A\gamma+B \beta \cos t+\beta) \cos( \arg(1+e^{i t})/2) \sqrt{2 \cos (t/2)} \big)^{2}.\\
	\end{split}
	\end{equation*}
	Define $h(t)=f(t)-g(t)$. Since $h(t)$ is an even function of $t$, we restrict to $0\leq t \leq \pi$. It can be easily verified that for both the cases $(i)$ and $(ii)$, the function $h(t)$ attains its minimum value either at $t=0$ or $t=\pi$. Note that for $k \geq 1$, $h(\pi)=(1-B^{2})k^{2}>0$ and
	\begin{equation}\label{p7eqn2.15}
	h(0)=(4 (\sqrt{2}-1) \beta -2 (\sqrt{2}-2) \gamma +k)^2-(B (4 (\sqrt{2} \beta +\gamma)+k)-2 A(2 \beta +\sqrt{2} \gamma))^2=:S(k).
	\end{equation}
	The function $S'$ is increasing as $S''(k)=2(1-B^{2})>0$ and therefore, the given hypothesis $(i)$ yields that $S'(k)\geq S'(1)=2(1+4(\sqrt{2}-1)\beta-2(\sqrt{2}-2)\gamma) -2B(-2A(2 \beta+\sqrt{2}\gamma)+B(1+4(\sqrt{2}\beta+\gamma))) \geq 0$ which gives that $S(k)\geq S(1)=(1+4(\sqrt{2}-1)\beta-2(\sqrt{2}-2)\gamma)^{2}-(-2A(2 \beta+\sqrt{2}\gamma)+B(1+4(\sqrt{2}\beta+\gamma)))^{2}$. Thus, the use of given condition $(ii)$ and ~\eqref{p7eqn2.15} yields $h(0)\geq 0.$ So, $h(t) \geq 0$ for all $t \in [0,\pi]$ and therefore, ~\eqref{p7eqn2.14} implies $|(P(z_{0})-1)/(A-BP(z_{0}))| \geq 1$. This contradicts the fact that $P(z)\prec (1+Az)/(1+Bz)$ and completes the proof.
\end{proof}

The next corollary is obtained by substituting $p(z)=zf'(z)/f(z)$ with $\gamma=0$, $A=1-2\alpha$, $(0 \leq \alpha <1)$ and $B=-1$ in Theorem ~\ref{p7thm2.4}.

\begin{corollary}\label{p7cor3.14}
	Let $f \in \mathcal{A}$. If the function $f$ satisfies the subordination
	\[\frac{z f'(z)}{f(z)} + \frac{1}{\beta}\left(1+\frac{zf''(z)}{f'(z)}-\frac{zf'(z)}{f(z)}\right)\prec \frac{1+(1-2\alpha)z}{1-z} \quad \left(\frac{1}{4(\alpha-\sqrt{2})}\leq \beta < 0, 0\leq \alpha <1\right),\]
	then $f \in \mathcal{S}^{*}_{L}.$
\end{corollary}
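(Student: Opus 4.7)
The plan is to apply Theorem~\ref{p7thm2.4} after recognising the left-hand side of the subordination as a Briot--Bouquet expression with the substitution $p(z)=zf'(z)/f(z)$, $\gamma=0$, $A=1-2\alpha$, $B=-1$. Once the parameters are matched, the whole argument reduces to checking that conditions (i) and (ii) of Theorem~\ref{p7thm2.4} follow from the stated bounds on $\alpha$ and $\beta$.

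First I would set $p(z)=zf'(z)/f(z)$, so that $p\in\mathcal{H}[1,1]$, and use the standard logarithmic identity $zp'(z)/p(z)=1+zf''(z)/f'(z)-zf'(z)/f(z)$. Taking $\gamma=0$ in the Briot--Bouquet operator then gives
\[
p(z)+\frac{zp'(z)}{\beta p(z)+\gamma}
= \frac{zf'(z)}{f(z)}+\frac{1}{\beta}\left(1+\frac{zf''(z)}{f'(z)}-\frac{zf'(z)}{f(z)}\right),
\]
so the hypothesis of the corollary is exactly $p(z)+zp'(z)/(\beta p(z)+\gamma)\prec (1+Az)/(1+Bz)$ with $A=1-2\alpha$, $B=-1$.

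Next I would verify the two hypotheses of Theorem~\ref{p7thm2.4} for these parameters. Substituting $\gamma=0$, $B=-1$, $A=1-2\alpha$, condition~(i) simplifies after cancellation to $2\beta(1-\alpha)\le 0$, which is immediate from $\beta<0$ and $\alpha<1$. For condition~(ii), write $L=1+4(\sqrt{2}-1)\beta$ and $R=1+4(\sqrt{2}+1-2\alpha)\beta$, so that the inequality $L^2\ge R^2$ factors as $(L-R)(L+R)\ge 0$. A short calculation gives $L-R=8\beta(\alpha-1)$, which is $\ge 0$ by the same sign analysis as in condition~(i), and $L+R=2+8\beta(\sqrt{2}-\alpha)$. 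Since $\sqrt{2}>\alpha$, the nonnegativity of $L+R$ is equivalent to $\beta\ge -1/(4(\sqrt{2}-\alpha))=1/(4(\alpha-\sqrt{2}))$, which is precisely the stated lower bound on $\beta$.

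With both conditions verified, Theorem~\ref{p7thm2.4} yields $p(z)\prec \sqrt{1+z}$, i.e.\ $zf'(z)/f(z)\prec \sqrt{1+z}$, which is the definition of $f\in\mathcal{S}^*_L$. I do not expect any real obstacle here: the only mildly delicate point is the factorisation in condition~(ii), which cleanly separates a sign constraint (guaranteed by $\beta<0$ and $\alpha<1$) from the quantitative lower bound on $\beta$; everything else is a direct specialisation of Theorem~\ref{p7thm2.4}.
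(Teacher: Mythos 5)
Your proposal is correct and follows exactly the route the paper intends: the corollary is presented as a direct specialisation of Theorem~\ref{p7thm2.4} with $p(z)=zf'(z)/f(z)$, $\gamma=0$, $A=1-2\alpha$, $B=-1$, and your verification of conditions (i) and (ii) --- in particular the factorisation $(L-R)(L+R)\geq 0$ with $L-R=8\beta(\alpha-1)\geq 0$ and $L+R=2+8\beta(\sqrt{2}-\alpha)\geq 0$, the latter being equivalent to $\beta\geq 1/(4(\alpha-\sqrt{2}))$ --- is what the substitution yields. The only nit is that condition (i) actually reduces to $8\beta(1-\alpha)\leq 0$ rather than $2\beta(1-\alpha)\leq 0$, which is immaterial to the sign argument.
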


By taking $p(z)=zF'(z)/F(z)$ with $\beta=1$ and $\gamma=c$ in Theorem ~\ref{p7thm2.4} gives the following corollary:

\begin{corollary}\label{p7cor3.3}
	Let $-1\leq B<A\leq 1$ satisfy the following conditions:
	\begin{enumerate}[(i)]
		\item $ 1+4(\sqrt{2}-1)-2(\sqrt{2}-2)c \geq B(-2A(2+\sqrt{2}c)+B(1+4(\sqrt{2}+c)))$.
		\item $(1+4(\sqrt{2}-1)-2(\sqrt{2}-2)c )^{2}\geq (-2A(2+\sqrt{2}c)+B(1+4(\sqrt{2}+c)))^{2} .$
	\end{enumerate}
	If $f \in \mathcal{S}^*[A,B]$ then $F \in \mathcal{S}^*_L$.
\end{corollary}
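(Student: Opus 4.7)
The plan is to derive this corollary directly from Theorem~\ref{p7thm2.4} by the standard substitution that links the Bernardi integral operator to a Briot--Bouquet differential equation via the logarithmic derivative. Define $p:\mathbb{D}\to\mathbb{C}$ by $p(z)=zF'(z)/F(z)$, which is analytic on $\mathbb{D}$ with $p(0)=1$. Differentiating the identity $(c+1)f(z)=zF'(z)+cF(z)$ obtained from~\eqref{p7eqn2.11}, and then dividing by $f(z)$, one arrives at the Briot--Bouquet relation
\[
\frac{zf'(z)}{f(z)}=p(z)+\frac{zp'(z)}{p(z)+c},
\]
exactly as already noted in the introduction.

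Next I would specialize Theorem~\ref{p7thm2.4} to $\beta=1$ and $\gamma=c$. Substituting these values into conditions (i) and (ii) of Theorem~\ref{p7thm2.4} yields precisely conditions (i) and (ii) of the present corollary, so the hypotheses transport verbatim. Since $f\in\mathcal{S}^{*}[A,B]$ is equivalent to $zf'(z)/f(z)\prec (1+Az)/(1+Bz)$, the identity above gives
\[
p(z)+\frac{zp'(z)}{p(z)+c}\prec\frac{1+Az}{1+Bz}.
\]
Theorem~\ref{p7thm2.4} then applies and delivers $p(z)\prec\sqrt{1+z}$, that is, $zF'(z)/F(z)\prec\sqrt{1+z}$, which is exactly the statement $F\in\mathcal{S}^{*}_{L}$.

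There is no real obstacle here; the only thing worth double-checking is that substituting $\beta=1$ and $\gamma=c$ into the inequalities of Theorem~\ref{p7thm2.4} reproduces the displayed inequalities of the corollary without any hidden algebraic simplification, and that the admissible range of $c$ leaves the quantity $\beta(1\pm A)+\gamma(1\pm B)$-type expressions (implicit in the proof of Theorem~\ref{p7thm2.4}) in the regime where the argument there was carried out. Given this bookkeeping check, the proof reduces to a one-line appeal to Theorem~\ref{p7thm2.4}.
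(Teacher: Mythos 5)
Your proof is correct and is exactly the paper's route: the paper states this corollary as an immediate consequence of Theorem~\ref{p7thm2.4} with $p(z)=zF'(z)/F(z)$, $\beta=1$, $\gamma=c$, using the Briot--Bouquet identity $zf'(z)/f(z)=p(z)+zp'(z)/(p(z)+c)$ coming from~\eqref{p7eqn2.11}, and the substitution does reproduce conditions (i) and (ii) verbatim. Your closing caveat about $\beta(1\pm A)+\gamma(1\pm B)$-type expressions is not needed here, since those hypotheses belong to Theorem~\ref{p7thm2.2}, not to Theorem~\ref{p7thm2.4}.
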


By taking $p(z)=F'(z)$ with $\beta=0$ and $\gamma=c+1$ in Theorem ~\ref{p7thm2.4} gives the following corollary:

\begin{corollary}\label{p7cor3.4}
	Suppose that $-1\leq B<A\leq 1$ satisfy the following conditions:
	\begin{enumerate}[(i)]
		\item $ 5- 2\sqrt{2}-2(\sqrt{2}-2)c \geq B(-2\sqrt{2}(c+1)A+(5+4c)B)$.
		\item $(5- 2\sqrt{2}-2(\sqrt{2}-2)c)^{2}\geq (-2\sqrt{2}(c+1)A+(5+4c)B)^{2} .$
	\end{enumerate}
	If $f \in \mathcal{R}[A,B]$ then $F'(z) \prec \sqrt{1+z}$.
\end{corollary}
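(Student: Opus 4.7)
The plan is to realize this corollary as a direct specialization of Theorem~\ref{p7thm2.4}, so the work consists of choosing the right auxiliary analytic function, using the Bernardi relation to rewrite the subordinand, and then matching the hypotheses coefficient by coefficient.

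First I would set $p(z):=F'(z)$. Since $F\in\mathcal{H}[0,1]$, we have $F(z)=z+\cdots$, so $p$ is analytic in $\mathbb{D}$ with $p(0)=1$. Differentiating the Bernardi identity~\eqref{p7eqn2.12}, namely $(c+1)f(z)=zF'(z)+cF(z)$, gives
\[(c+1)f'(z)=F'(z)+zF''(z)+cF'(z),\]
which rearranges to $f'(z)=F'(z)+zF''(z)/(c+1)=p(z)+zp'(z)/(c+1)$. Thus for the choices $\beta=0$ and $\gamma=c+1$ we have exactly $f'(z)=p(z)+zp'(z)/(\beta p(z)+\gamma)$. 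The hypothesis $f\in\mathcal{R}[A,B]$ is the statement $f'(z)\prec(1+Az)/(1+Bz)$, which is therefore the hypothesis of Theorem~\ref{p7thm2.4}.

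Next I would verify that conditions (i) and (ii) of Theorem~\ref{p7thm2.4} reduce, under $\beta=0$ and $\gamma=c+1$, precisely to conditions (i) and (ii) of the present corollary. For the left-hand side the expression $1+4(\sqrt{2}-1)\beta-2(\sqrt{2}-2)\gamma$ collapses to $1-2(\sqrt{2}-2)(c+1)=5-2\sqrt{2}-2(\sqrt{2}-2)c$, while on the right-hand side $-2A(2\beta+\sqrt{2}\gamma)+B(1+4(\sqrt{2}\beta+\gamma))$ collapses to $-2\sqrt{2}A(c+1)+B(5+4c)$. So the two scalar inequalities in the corollary are indeed the specializations of conditions (i) and (ii) of Theorem~\ref{p7thm2.4}. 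Applying Theorem~\ref{p7thm2.4} gives $p(z)\prec\sqrt{1+z}$, that is $F'(z)\prec\sqrt{1+z}$, which is the desired conclusion.

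There is really no obstacle: the only substantive issue is bookkeeping in the algebraic reduction of the two hypotheses, since the product $\beta\gamma$ and the pure-$\beta$ terms must all vanish cleanly to make the specialization match. I would present the proof as a single short paragraph combining the three steps above, with a one-line computation exhibiting $f'(z)=p(z)+zp'(z)/(c+1)$ and a parenthetical remark that the two stated inequalities are exactly conditions (i) and (ii) of Theorem~\ref{p7thm2.4} when $\beta=0$ and $\gamma=c+1$.
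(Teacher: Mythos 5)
Your proposal is correct and coincides with the paper's own derivation: the paper states this corollary precisely as the specialization of Theorem~\ref{p7thm2.4} obtained by taking $p(z)=F'(z)$, $\beta=0$ and $\gamma=c+1$, with the identity $f'(z)=F'(z)+zF''(z)/(c+1)$ coming from differentiating the Bernardi relation~\eqref{p7eqn2.12}. Your coefficient check that conditions (i) and (ii) of the theorem collapse to the two stated inequalities is exactly the bookkeeping the paper leaves implicit.
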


In the next result, we compute the conditions on the real numbers $A, B$, $\beta$ and $\gamma$ so that $p(z)+(zp'(z))/(\beta p(z)+\gamma) \prec (1+Az)/(1+Bz), (-1\leq B<A\leq 1)$ implies $p(z)\prec e^{z}$, where $p \in \mathcal{H}$ with $p(0)=1.$ As an application of the next result, it provides sufficient conditions for $f \in \mathcal{A}$ to belong to the class $\mathcal{S}^*_e$.

\begin{theorem}\label{p7thm2.5}
	Let $-1\leq B<A\leq 1$ and $\beta, \gamma \in \mathbb{R}$ satisfy the following conditions:
	\begin{enumerate}[(i)]
		\item $e^2\beta(1-B^2)+e(-B(-A\beta +B\gamma+B)-\beta +\gamma+1)+\gamma (A B-1)\geq 0$.
		\item $(e((A+e-1)\beta-(e\beta+1)B+1)+\gamma(A+e(1-B)-1))(e(-(A-e+1)\beta+B(e\beta+1)+1)+\gamma(-A+e (B+1)-1))\geq 0 .$
		\item $e(\beta(1-A B)+B^2(\gamma-1)-\gamma +1)+e^2\gamma(1-AB)+\beta (B^2-1)\geq 0.$
		\item $(e((A-1)\beta+(1-B)(\gamma-1))+e^2(A-1)\gamma+\beta(1-B))(-e((A+1) \beta+(B+1)(1-\gamma))-e^2(A+1)\gamma+\beta(B+1))\geq 0$.
	\end{enumerate}
	Let $p \in \mathcal{H}$ with $p(0)=1$. If the function $p$ satisfies
	\[ p(z)+\frac{zp'(z)}{\beta p(z)+\gamma} \prec \frac{1+Az}{1+Bz}, \]
	then $p(z)\prec e^{z}$.
\end{theorem}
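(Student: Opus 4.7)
The plan is to invoke Lemma~\ref{p7lem2} exactly as in Theorem~\ref{p7thm2.6}, but now with the target region taken to be the Janowski disc
\[\Omega=\left\{\frac{1+Aw}{1+Bw}:w\in\mathbb{D}\right\}=\left\{\zeta\in\mathbb{C}:|\zeta-1|<|A-B\zeta|\right\}\]
instead of the lemniscate $\{|w^{2}-1|<1\}$. Setting $\psi(r,s;z)=r+s/(\beta r+\gamma)$, the hypothesis gives $\psi(p(z),zp'(z);z)\in\Omega$, so by Lemma~\ref{p7lem2} it suffices to verify the admissibility condition
\[P(t,k):=\psi(e^{e^{it}},ke^{it}e^{e^{it}};z)\notin\Omega,\qquad t\in[-\pi,\pi],\ k\geq1,\]
that is, $h(t,k):=|P(t,k)-1|^{2}-|A-BP(t,k)|^{2}\geq0$.

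First I would expand $P(t,k)=e^{e^{it}}+ke^{it}e^{e^{it}}/(\beta e^{e^{it}}+\gamma)$ using $e^{e^{it}}=e^{\cos t}(\cos(\sin t)+i\sin(\sin t))$ and write $h(t,k)$ as a quotient of trigonometric polynomials. Since all ingredients depend on $t$ through functions even/odd in the correct combinations, $h(t,k)$ is even in $t$, so we may restrict to $t\in[0,\pi]$. As in Theorems~\ref{p7thm2.6} and \ref{p7thm2.4}, one checks that the minimum of $h(t,k)$ on $[0,\pi]$ is attained at an endpoint, reducing the task to proving $h(0,k)\geq0$ and $h(\pi,k)\geq0$ for every $k\geq1$.

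Both endpoint values are real and computable in closed form. At $t=0$, writing $u=e\beta+\gamma$ one obtains $P(0,k)=e(u+k)/u$, and a short manipulation puts $h(0,k)$ in the form
\[h(0,k)=\frac{(X-Y)(X+Y)}{u^{2}},\qquad X\pm Y=u\bigl(e(1\mp B)-(1\mp A)\bigr)+ek(1\mp B),\]
which is a quadratic in $k$ with positive leading coefficient $e^{2}(1-B^{2})$ (the boundary case $B=-1$ degenerates to a linear function and is easier). Condition~(ii) is precisely the assertion $(X-Y)(X+Y)\geq0$ at $k=1$, so $h(0,1)\geq0$; condition~(i) is, up to a positive factor of $2e$, the assertion $\partial_{k}h(0,k)\geq 0$ at $k=1$. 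A quadratic with positive leading coefficient that is nonnegative at $k=1$ and nondecreasing there stays nonnegative for all $k\geq 1$, giving $h(0,k)\geq0$. A parallel computation at $t=\pi$, with $v=\beta/e+\gamma$ and $P(\pi,k)=(v-k)/(ev)$, produces another quadratic in $k$ with positive leading coefficient proportional to $1-B^{2}$; conditions~(iv) and~(iii) are, respectively, the value and derivative versions at $k=1$ of the same argument, so $h(\pi,k)\geq0$ for $k\geq1$. Lemma~\ref{p7lem2} then yields $p\prec e^{z}$.

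The hard part is the reduction ``$h(t,k)\geq0$ on $[0,\pi]$'' to ``$h(0,k)\geq0$ and $h(\pi,k)\geq0$''. The companion proofs of Theorems~\ref{p7thm2.6} and \ref{p7thm2.4} dispatch the analogous step with the phrase \emph{``it can be easily verified''}, but for the present target $e^{z}$ the expression for $h(t,k)$ contains nested exponentials combined with the Janowski factors $|A-B\zeta|^{2}$, and a fully rigorous justification will require either a careful derivative analysis showing that $\partial_{t}h$ has no interior critical point producing a value below both endpoints, or an algebraic rearrangement into a majorization by the endpoint quantities under the standing hypotheses on $\beta,\gamma,A,B$. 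Once this monotonicity-type reduction is in hand, the four numbered conditions have been calibrated so as to handle the two endpoint quadratics exactly.
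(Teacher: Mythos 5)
Your proposal follows essentially the same route as the paper's proof: the same application of Lemma~\ref{p7lem2} with the Janowski disc as $\Omega$, the same reduction by evenness to the endpoints $t=0$ and $t=\pi$, and the same identification of conditions (i)--(iv) as the derivative and value at $k=1$ of the two endpoint quadratics in $k$ with leading coefficient proportional to $1-B^2$. The endpoint-reduction step you flag as the hard part is likewise only asserted (``it can be easily verified'') in the paper, so your account matches the published argument, including its level of justification at that point.
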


\begin{proof}
	Define the functions $\psi: {\mathbb{C}}^{2} \times \mathbb{D} \to \mathbb{C}$ and $q:\mathbb{D} \to \mathbb{C}$ as follows:
	\begin{equation}\label{p7eqn2.16}
	\psi(r,s;z)=r+\frac{s}{\beta r+\gamma}\quad \text{and}\quad q(z)=\frac{1+Az}{1+Bz}
	\end{equation}
	so that $\Omega:=q(\mathbb{D})=\left\{w\in\mathbb{C}:|(w-1)/(A-Bw)|<1 \right\}$ and $\psi(p(z),zp'(z);z)\in \Omega$ for $z \in \mathbb{D}$. To prove $p(z)\prec e^{z}$, we use Lemma ~\ref{p7lem2} so we need to show that $\psi(e^{{e}^{it}}, k e^{it}e^{{e}^{it}}; z) \notin \Omega$ which is equivalent to show that $|(\psi(e^{{e}^{it}}, k e^{it}e^{{e}^{it}}; z)-1)/(A-B \psi(e^{{e}^{it}}, k e^{it}e^{{e}^{it}}; z))| \geq 1$, where $z \in \mathbb{D}$, $t \in [-\pi, \pi]$ and $k \geq 1$. A simple computation and ~\eqref{p7eqn2.16} yield that
	\begin{equation*}
	\psi(e^{{e}^{it}}, k e^{it}e^{{e}^{it}}; z)=e^{e^{i t}}+\frac{k e^{i t}e^{e^{i t}}}{\beta e^{e^{i t}}+\gamma}\quad (-\pi\leq t \leq \pi)
	\end{equation*}
	and
	\begin{equation}\label{p7eqn2.17}
	\left|\frac{\psi(e^{{e}^{it}}, k e^{it}e^{{e}^{it}}; z)-1}{A-B\psi(e^{{e}^{it}}, k e^{it}e^{{e}^{it}}; z)}\right|^{2}=:\frac{f(t)}{g(t)}\quad (-\pi\leq t \leq \pi),
	\end{equation}
	where
	\begin{equation*}
	\begin{split}
	f(t)=&e^{3 \cos t}(2 \beta  k \sin t \sin (\sin t)+2 \beta  k \cos t \cos (\sin t)-2 \beta ^2 \cos (\sin t)+2 \beta  \gamma  \cos (\sin t))\\
	&+e^{2 \cos t}((\beta-\gamma)^2+k^2-2 \beta  k \cos t+2 \gamma  k \cos t+2 \beta \gamma  \sin ^2(\sin t)-2\beta \gamma \cos ^2(\sin t))\\
	&+e^{\cos t}(2 \gamma  k \sin t \sin(\sin t)-2\gamma  k\cos t \cos (\sin t)+2 \beta  \gamma  \cos (\sin t)-2 \gamma^2 \cos(\sin t))\\
	&+\beta ^2 e^{4 \cos t}+\gamma ^2\\
	\end{split}
	\end{equation*}
	and
	\begin{equation*}
	\begin{split}
	g(t)=&A^2\gamma^2+\beta^2 B^2 e^{4 \cos t}+2\beta B e^{3 \cos t}((B\gamma -A\beta )\cos (\sin t)+Bk \cos (t-\sin t))\\
	&+e^{2 \cos t}(B(B(\gamma ^2+k^2)-2 A \beta\gamma)+2B(B\gamma-A\beta)k \cos t-2AB\beta\gamma \cos(2\sin t)\\
	&+A^2 \beta ^2)+2A\gamma  e^{\cos t}((A\beta-B\gamma)\cos (\sin t)-Bk\cos(t+\sin t)).
	\end{split}
	\end{equation*}
	Define $h(t)=f(t)-g(t)$. Since $h(-t)=h(t)$, we restrict to $0\leq t \leq \pi$. It can be easily verified that the function $h(t)$ attains its minimum value either at $t=0$ or $t=\pi$. For $k \geq 1$, we have
	\begin{equation}\label{p7eqn2.18}
	\begin{split}
	h(0)=&e^2((1-A^2)\beta^2+2k(\beta (AB-1)+(1-B^2)\gamma)+4\beta\gamma(AB-1)+(1-B^2)(\gamma^2+k^2))\\
	&+2e\gamma(-A^2\beta+(AB-1)(\gamma+k)+\beta)+2e^3\beta(\beta(AB-1)+(1-B^2)(\gamma+k))\\
	&+e^4\beta^2(1-B^2)+(1-A^2)\gamma^{2}=:\phi(k)
	\end{split}
	\end{equation}
	and
	\begin{equation}\label{p7eqn2.19}
	\begin{split}
	h(\pi)=&\frac{-1}{e^4}(e ((A-1)\beta  +(1-B)(\gamma-k))+e^2(A-1)\gamma+\beta(1-B))\\
	&(e((1+A)\beta+(B+1)(k-\gamma))+e^2 (A+1)\gamma -\beta  (B+1))=:\psi(k).
	\end{split}
	\end{equation}
	The function $\phi'$ is increasing as $\phi''(k)=2(1-B^{2})e^2>0$ and therefore, the given hypothesis $(i)$ yields that $\phi'(k)\geq \phi'(1)=2e(e(-B(-A\beta +B\gamma+B)-\beta +\gamma+1)+\gamma (A B-1)+e^2\beta(1-B^2))\geq 0$ which gives that $\phi(k)\geq \phi(1)=(e((A+e-1)\beta-(e\beta+1)B+1)+\gamma(A+e(1-B)-1))(e(-(A-e+1)\beta+B(e\beta+1)+1)+\gamma(-A+e (B+1)-1))$. Thus, the use of given condition $(ii)$ and ~\eqref{p7eqn2.18} yields $h(0)\geq 0.$
	
	In view of $(iii)$, observe that $\psi''(k)=2(1-B^2)/e^2>0$ and therefore, $\min \psi'(k)=\psi'(1)=2(e(\beta(1-A B)+B^2(\gamma-1)-\gamma +1)+e^2\gamma(1-AB)+\beta (B^2-1))/e^3 \geq 0$ which implies $\min \psi(k)=\psi(1)=((e((A-1)\beta+(1-B)(\gamma-1))+e^2(A-1)\gamma+\beta(1-B))(-e((A+1) \beta+(B+1)(1-\gamma))-e^2(A+1)\gamma+\beta(B+1)))/e^4$. Hence, the use of given condition $(iv)$ and ~\eqref{p7eqn2.19} yields that $h(\pi)\geq 0.$ So, $h(t)\geq 0, (0\leq t \leq \pi)$ and thus, ~\eqref{p7eqn2.17} implies $|(\psi(e^{{e}^{it}}, k e^{it}e^{{e}^{it}}; z)-1)/(A-B\psi(e^{{e}^{it}}, k e^{it}e^{{e}^{it}}; z))| \geq 1$ and therefore, $p(z) \prec e^{z}$.
\end{proof}

The next corollary is obtained by substituting $p(z)=zf'(z)/f(z)$ with $\gamma=0$, $B=0$ and $A=1-\alpha$, $(0 \leq \alpha<1)$ in Theorem ~\ref{p7thm2.5}.

\begin{corollary}\label{p7cor3.15}
	Suppose $0 \leq \alpha<1$ and $\beta \geq 1/(1-e)$ satisfy the conditions $(-\alpha  \beta +\beta  e+1) (\beta  (\alpha +e-2)+1)\geq 0$ and
	$(\beta -e ((2-\alpha ) \beta +1)) (\beta +e (-\alpha  \beta -1))\geq 0$.
	If the function $f \in \mathcal{A}$ satisfies the condition
	\[\left|\frac{z f'(z)}{f(z)} + \frac{1}{\beta}\left(1+\frac{zf''(z)}{f'(z)}-\frac{zf'(z)}{f(z)}\right)-1\right|< 1-\alpha,\]
	then $f \in \mathcal{S}^{*}_{e}.$
\end{corollary}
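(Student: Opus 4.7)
The plan is to deduce this corollary as a direct specialization of Theorem~\ref{p7thm2.5}. First I set $p(z)=zf'(z)/f(z)$, which is analytic in $\mathbb{D}$ with $p(0)=1$, and apply logarithmic differentiation to obtain $zp'(z)/p(z)=1+zf''(z)/f'(z)-zf'(z)/f(z)$. With the choice $\gamma=0$ the Briot--Bouquet expression $p(z)+zp'(z)/(\beta p(z)+\gamma)$ collapses to $p(z)+\beta^{-1}\,zp'(z)/p(z)$, which is precisely the quantity appearing on the left-hand side of the hypothesis of the corollary.

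Next I would rewrite the inequality $\bigl|p(z)+\beta^{-1}zp'(z)/p(z)-1\bigr|<1-\alpha$ as a subordination. Since the disc $\{w:|w-1|<1-\alpha\}$ is the image of $\mathbb{D}$ under $z\mapsto 1+(1-\alpha)z=(1+Az)/(1+Bz)$ with $A=1-\alpha$ and $B=0$, the hypothesis is precisely
\[ p(z)+\frac{zp'(z)}{\beta p(z)+\gamma}\prec\frac{1+Az}{1+Bz},\qquad A=1-\alpha,\ B=0,\ \gamma=0. \]
An application of Theorem~\ref{p7thm2.5} to this subordination would then give $p(z)\prec e^{z}$, i.e.\ $f\in\mathcal{S}^{*}_{e}$. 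What remains is therefore a purely algebraic check: the four conditions (i)--(iv) of Theorem~\ref{p7thm2.5} must be implied by the hypotheses of the corollary.

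The verification proceeds by substituting $\gamma=0$, $B=0$, $A=1-\alpha$ into each condition of Theorem~\ref{p7thm2.5} and simplifying. I expect condition~(i) to collapse to $e\bigl(\beta(e-1)+1\bigr)\geq 0$, equivalently $\beta\geq 1/(1-e)$, which is the first standing hypothesis of the corollary. Condition~(iii) should reduce to $\beta(e-1)+e\geq 0$, a strictly weaker inequality that is automatically implied by $\beta\geq 1/(1-e)$ since $e/(1-e)<1/(1-e)<0$. Conditions~(ii) and~(iv) should, after cancelling the positive prefactor $e^{2}$ from~(ii) and grouping common terms, match exactly the two product-type inequalities $(-\alpha\beta+e\beta+1)\bigl(\beta(\alpha+e-2)+1\bigr)\geq 0$ and $\bigl(\beta-e((2-\alpha)\beta+1)\bigr)\bigl(\beta+e(-\alpha\beta-1)\bigr)\geq 0$ stated in the corollary.

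The main (and essentially only) obstacle is the bookkeeping in these four reductions: the expressions in (i)--(iv) are lengthy in the generic parameters $A,B,\gamma$, and one has to keep the signs of the various linear factors consistent when $\beta$ and the other parameters are negative. Once each of (i)--(iv) has been checked to hold under the corollary's hypotheses, Theorem~\ref{p7thm2.5} yields $zf'(z)/f(z)\prec e^{z}$, which is the definition of $f\in\mathcal{S}^{*}_{e}$, completing the proof.
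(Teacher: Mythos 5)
Your proposal is correct and coincides with the paper's own derivation: the paper obtains this corollary precisely by substituting $p(z)=zf'(z)/f(z)$, $\gamma=0$, $B=0$, $A=1-\alpha$ into Theorem~\ref{p7thm2.5}, and your algebraic reductions of conditions (i)--(iv) (with (i) giving $\beta\geq 1/(1-e)$, (iii) being implied by it, and (ii), (iv) matching the two product inequalities) all check out.
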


The two parts of the following corollary are obtained by taking $p(z)=zF'(z)/F(z)$ with $\beta=1$, $\gamma=c$ and $p(z)=F'(z)$ with $\beta=0$, $\gamma=c+1$ respectively in Theorem ~\ref{p7thm2.5}.

\begin{corollary}\label{p7cor3.5}
	\begin{enumerate}[(i)]
		\item  If the function $f\in \mathcal{S}^*[A,B]$ and the conditions of the Theorem ~\ref{p7thm2.5} hold with $\beta=1$ and $\gamma=c$, then $F \in \mathcal{S}^*_e.$
		\item The function $f \in \mathcal{R}[A,B]$ and the conditions of the Theorem ~\ref{p7thm2.5} hold with $\beta=0$ and $\gamma=c+1$, then  $F'(z)\prec e^{z}.$
	\end{enumerate}
\end{corollary}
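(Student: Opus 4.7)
The plan is to reduce each part of the corollary to a direct invocation of Theorem~\ref{p7thm2.5} by choosing an appropriate auxiliary function $p$ with $p(0)=1$ and verifying that the Briot--Bouquet expression $p(z)+zp'(z)/(\beta p(z)+\gamma)$ collapses, in each case, to a quantity whose subordination to $(1+Az)/(1+Bz)$ is precisely the hypothesis on $f$. No new analytic inequalities are required; the work is purely in translating through the definition of Bernardi's integral.

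For part (i), I set $p(z)=zF'(z)/F(z)$, which is analytic with $p(0)=1$ since $F\in\mathcal{H}[0,1]$. Starting from the identity $(c+1)f(z)=zF'(z)+cF(z)$, which comes from differentiating Bernardi's integral \eqref{p7eqn2.11} and is already recorded as \eqref{p7eqn2.12}, a logarithmic differentiation produces the standard relation
\[\frac{zf'(z)}{f(z)}=p(z)+\frac{zp'(z)}{p(z)+c}\]
noted in the introduction. With $\beta=1$ and $\gamma=c$ this is exactly $p(z)+zp'(z)/(\beta p(z)+\gamma)$. The assumption $f\in\mathcal{S}^*[A,B]$ means $zf'(z)/f(z)\prec(1+Az)/(1+Bz)$, so Theorem~\ref{p7thm2.5} applies directly and yields $p(z)\prec e^z$, that is, $F\in\mathcal{S}^*_e$.

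For part (ii), I set $p(z)=F'(z)$, which again satisfies $p(0)=1$. Differentiating $(c+1)f(z)=zF'(z)+cF(z)$ once gives $(c+1)f'(z)=(c+1)F'(z)+zF''(z)$, so
\[f'(z)=F'(z)+\frac{zF''(z)}{c+1}=p(z)+\frac{zp'(z)}{c+1}.\]
With $\beta=0$ and $\gamma=c+1$ the right-hand side is $p(z)+zp'(z)/(\beta p(z)+\gamma)$, and the hypothesis $f\in\mathcal{R}[A,B]$ is $f'(z)\prec(1+Az)/(1+Bz)$. A second application of Theorem~\ref{p7thm2.5} then gives $p(z)=F'(z)\prec e^z$.

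The only point that requires checking is that the conditions (i)--(iv) in Theorem~\ref{p7thm2.5} remain non-degenerate after the specializations $(\beta,\gamma)=(1,c)$ and $(\beta,\gamma)=(0,c+1)$; in particular, when $\beta=0$ the Briot--Bouquet denominator reduces to the constant $c+1$, which is nonzero because $c>-1$ is required in the Bernardi operator \eqref{p7eqn2.11}. This is the mildest possible obstacle: once the algebraic substitutions are made, both conclusions follow mechanically, and the corollary is a clean packaging of Theorem~\ref{p7thm2.5} in the language of the integral operator $F$.
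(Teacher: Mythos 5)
Your proof is correct and follows exactly the route the paper intends: the paper states (just before the corollary) that the two parts are obtained by taking $p(z)=zF'(z)/F(z)$ with $\beta=1$, $\gamma=c$ and $p(z)=F'(z)$ with $\beta=0$, $\gamma=c+1$ in Theorem~\ref{p7thm2.5}, and the detailed computation via $(c+1)f(z)=zF'(z)+cF(z)$ is the same one carried out in the proof of the analogous corollary to Theorem~\ref{p7thm2.6}. Your added remark that $\beta=0$, $\gamma=c+1\neq 0$ keeps the denominator non-degenerate is a sensible, if minor, extra check.
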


In the next result, we find the conditions on the real numbers $A, B$, $\beta$ and $\gamma$ so that $p(z)\prec (1+Az)/(1+Bz), (-1\leq B<A\leq 1)$, whenever $p(z)+(zp'(z))/(\beta p(z)+\gamma) \in \Omega_{P}$, where $p \in \mathcal{H}$ with $p(0)=1.$ As an application of the next result, it provides sufficient conditions for $f \in \mathcal{A}$ to belong to the class $\mathcal{S}^*[A,B]$.

\begin{theorem}\label{p7thm2.3}
	Let $-1\leq B<A\leq 1$ and $\beta, \gamma \in \mathbb{R}$. For $k\geq 1$ and $0\leq m \leq 1$, assume that $G:=A \beta + B \gamma, L:= k+ \beta + \gamma$. Further assume that
	\begin{enumerate}[(i)]
		\item $BG(\beta + \gamma)>0.$
		\item
		\begin{equation*}
		\begin{split}
		\big(G&(A^2 L+4 (\beta +\gamma ))-2 B(A G L+2 (\beta +\gamma )^2+2 G^2)+B^2 G (4 (\beta +\gamma )+L)\big)\\
		&\big(G \left(A^2 L-4 (\beta +\gamma )\right)+B \left(-2 A G L+4 (\beta +\gamma )^2+4 G^2\right)+B^2 G (L-4 (\beta +\gamma ))\big)\\
		&\quad{}\geq 2 G (A-B)^2 \big(G L(A^2 L-4 (\beta +\gamma ))-2 B(A G L^2+2 G^2 (L-2 (\beta +\gamma ))\\
		&\qquad{}-2 L (\beta +\gamma ) (-\beta -\gamma +2 L))+B^2 G L (L-4 (\beta +\gamma ))\big).
		\end{split}
		\end{equation*}
		\item $8G (A-B)^2 (\beta +\gamma +k)\leq 2 (B-1)^{2} G (\beta +\gamma )+2 B (\beta +\gamma -G )^2.$
		\item $1+\beta+\gamma\geq 0$, $G \geq 0.$
		\item $4 m^4 (A-B)^2 (\beta +\gamma +G+1)^2 \geq(B+1)^2 (\beta +\gamma +G)^2.$
	\end{enumerate}
	Let $p \in \mathcal{H}$ with $p(0)=1$. If the function $p$ satisfies
	\[ p(z)+\frac{zp'(z)}{\beta p(z)+\gamma} \prec \varphi_{PAR}(z), \]
	then $p(z)\prec (1+Az)/(1+Bz)$.
\end{theorem}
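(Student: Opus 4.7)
The strategy mirrors that of Theorem~\ref{p7thm2.2}. I would introduce the auxiliary function
\[w(z):=\frac{p(z)-1}{A-Bp(z)},\]
so that $p(z)=(1+Aw(z))/(1+Bw(z))$ and $w$ is analytic in $\D$ with $w(0)=0$. Then $p\prec(1+Az)/(1+Bz)$ is equivalent to $|w(z)|<1$ on $\D$, which I would establish by contradiction: if there were $z_0\in\D$ with $|w(z_0)|=1=\max_{|z|\leq|z_0|}|w(z)|$, Lemma~\ref{p7lem1} would supply a $k\geq 1$ with $z_0 w'(z_0)=kw(z_0)$. Writing $w(z_0)=e^{it}$ for some $t\in[-\pi,\pi]$ and substituting into
\[P(z_0)\;=\;p(z_0)+\frac{z_0 p'(z_0)}{\beta p(z_0)+\gamma},\]
I would obtain an explicit closed form $P(z_0)=u(t,k)+iv(t,k)$ as a rational expression in $e^{it}$, $k$, and the parameters (in which the abbreviations $G=A\beta+B\gamma$ and $L=k+\beta+\gamma$ used in the statement appear naturally).

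The contradiction comes from showing $P(z_0)\notin\Omega_P$, i.e.\ $v(t,k)^2\geq 2u(t,k)-1$ for every admissible $(t,k)$. Define $h(t):=v(t,k)^2-2u(t,k)+1$. Since $h(-t)=h(t)$, it suffices to treat $t\in[0,\pi]$. A critical-point analysis in $t$, in which the positivity assumption (i) $BG(\beta+\gamma)>0$ is used to control the sign of the dominant trigonometric cross-terms in $h'(t)$, should reduce the minimisation problem to the two endpoints, so that it is enough to verify $h(0)\geq 0$ and $h(\pi)\geq 0$.

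At $t=0$ the expression $h(0)$ becomes a quadratic polynomial in $k$. The structural inequality (iii) controls the sign of the leading coefficient so that $h(0)$ is monotone non-decreasing in $k$ on $[1,\infty)$; combined with (ii), which is precisely $h(0)|_{k=1}\geq 0$ written as a product of two linear forms in the parameters, this gives $h(0)\geq 0$ for all $k\geq 1$. The parameter $m\in[0,1]$ enters at $t=\pi$: the relevant boundary arc of $\Omega_P$ meeting the image of $P(z_0)$ near $t=\pi$ is parametrised by $m$, and conditions (iv) and (v) together ensure that the corresponding expression $h(\pi)$ factors as a non-negative product uniformly in $k\geq 1$ and $m\in[0,1]$. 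The resulting inequalities $|(P(z_0)-1)/(A-BP(z_0))|\geq 1$ would then contradict $P\prec\varphi_{PAR}$ (since $\Omega_P\subset\{w:|(w-1)/(A-Bw)|<1\}$ under the given hypotheses), completing the proof.

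The main obstacle I anticipate is the interior step: showing that $h$ actually attains its minimum at $t\in\{0,\pi\}$. The derivative $h'(t)$ is lengthy, and one needs precisely the sign information packaged in (i) together with the sign constraints in (iv) (namely $1+\beta+\gamma\geq 0$ and $G\geq 0$) to rule out interior minima. The endpoint inequalities themselves are essentially algebraic repackagings of (ii), (iii) and (v); the serious technical bookkeeping is in reducing to those endpoints and in producing the factorisation used at $t=\pi$ that makes condition (v) exactly the right inequality.
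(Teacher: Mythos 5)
Your set-up (the auxiliary function $w=(p-1)/(A-Bp)$, Lemma~\ref{p7lem1}, and the closed form for $P(z_0)$ in terms of $e^{it}$, $k$, $G$ and $L$) agrees with the paper, but the core of the argument is missing, and the part you do sketch does not match how hypotheses (i)--(v) actually enter. The paper does \emph{not} verify $P(z_0)\notin\Omega_P$ through the direct parabola inequality $(\IM P)^2\geq 2\RE P-1$. Instead it inverts $\varphi_{PAR}$: writing $h=u+iv=\sqrt{(P-1)\pi^{2}/2}$, the point $P(z_0)$ lies outside $\Omega_P$ as soon as $\RE e^{h(z_0)}\leq 0$, i.e.\ $\cos v\leq 0$, for which it suffices that $1/2\leq|v/\pi|\leq 1$. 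The parameter $m$ in the statement is therefore not a parametrisation of a boundary arc of $\Omega_P$, as you guess; it is the concrete quantity
\[
m(t)=\sin\Bigl(\tfrac12\arg\bigl(e^{it}(A-B)(Ge^{it}+L)\big/\bigl((1+Be^{it})(Ge^{it}+\beta+\gamma)\bigr)\bigr)\Bigr),
\]
which appears when one extracts $|v|/\pi$ from the square root. The two bounds $|v/\pi|\leq 1$ and $|v/\pi|\geq 1/2$ are proved separately, and that split is exactly where all five hypotheses live.

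Your claimed reduction to the endpoints $t=0,\pi$ is also not what happens. After substituting $x=\cos t$, the bound $|v/\pi|\leq 1$ becomes $F(x)\geq 0$ for a quadratic $F$ whose minimum is attained at an \emph{interior} critical point $x_0$ (hypothesis (i) gives $F''(x_0)=32BG(\beta+\gamma)>0$), and hypothesis (ii) is precisely $F(x_0)\geq 0$ --- not the value of anything at $k=1$ or $t=0$. The bound $|v/\pi|\geq 1/2$ becomes $H(x)\geq 0$ with $H$ concave; (iii) and (iv) force $H'(x)\leq H'(-1)\leq 0$, hence $H(x)\geq H(1)=\psi(k)$, and monotonicity of $\psi$ in $k$ together with (v) give $\psi(k)\geq\psi(1)\geq 0$. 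Without the inversion step and the identification of $m(t)$, the stated hypotheses have no visible connection to the inequality $(\IM P)^2\geq 2\RE P-1$ you propose to verify, so the proposal as written cannot be completed into a proof of this theorem.
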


\begin{proof}
	Define the functions $P$ and $w$ as given by the equation ~\eqref{p7eqn2.4} which implies $p(z)=(1+Aw(z))/(1+Bw(z)).$ Proceeding as in Theorem ~\ref{p7thm2.2}, we need to show that $|w(z)|<1$ in $\mathbb{D}$. If possible suppose that there exists $z_{0}\in \mathbb{D}$ such that
	\[\max_{|z|\leq |z_{0}|}|w(z)|=|w(z_{0})|=1,\]
	then by Lemma ~\ref{p7lem1}, it follows that there exists $k\geq 1$ so that $z_{0}w'(z_{0})=kw(z_{0}).$ Let $w(z_{0})=e^{it}$, $(-\pi\leq t \leq \pi)$. A simple calculation and by using ~\eqref{p7eqn2.4}, we get
	\begin{equation}\label{p7eqn2.80}
	P(z_{0})=\frac{k e^{i t} (A-B)+\left(1+A e^{i t}\right) \left(\beta +\gamma +G e^{i t}\right)}{\left(1+B e^{i t}\right) \left(\beta +\gamma +G e^{i t}\right)} \quad (-\pi \leq t \leq \pi).
	\end{equation}
	Define the function $h$ by
	\begin{equation}\label{p7eqn2.10}
	h(z)=u+iv=\sqrt{(P(z)-1)\pi^{2}/2}.
	\end{equation}
We show that $|(e^{h(z_{0})}-1)/(e^{h(z_{0})}+1)|^{2}\geq 1$; this condition  is same as the inequality $\RE e^{h(z_{0})}\leq 0$. This last inequality is indeed equivalent to  $\cos v \leq 0$ or $1/2\leq|v/\pi|\leq 1$. By using the definition of $h$ given in ~\eqref{p7eqn2.10} together with ~\eqref{p7eqn2.80}, we get
	\begin{equation}\label{p7eqn2.60}
	\frac{|v|}{\pi}=\frac{\sqrt{A-B}| m(t)| |G e^{i t}+L|^{1/2}}{\sqrt{2}|1+B e^{i t}|^{1/2}|G e^{i t}+\beta +\gamma|^{1/2}} \quad (-\pi \leq t \leq \pi),
	\end{equation}
	where $m(t)=\sin \left( \arg \left((e^{i t} (A-B) (G e^{i t}+L))/((1+B e^{i t})(G e^{i t}+\beta +\gamma ))\right)/2\right).$
	
	(a) We will first show that $|v/\pi|\leq 1$ which by using the fact that $|m(t)|\leq 1$ and ~\eqref{p7eqn2.60} is same as to show that $f(t)\geq 0$ $(-\pi \leq t \leq \pi)$, where
	\[ f(t)=4(1+B^{2}+2B \cos t)((\beta +\gamma)^{2}+G^{2}+2(\beta +\gamma)G \cos t)-(A-B)^{2}(L^{2}+G^{2}+2LG\cos t).\]
	After substituting $x=\cos t$ $(-\pi \leq t \leq \pi)$, the above inequality reduces to $F(x)\geq 0$ for all $x$ with $-1\leq x \leq 1$, where
	\[F(x)=4(1+B^{2}+2Bx)((\beta +\gamma)^{2}+G^{2}+2(\beta +\gamma)G x)-(A-B)^{2}(L^{2}+G^{2}+2LGx). \]
	A simple computation shows that for
	\[x_{0}=\frac{G \left(A^2 L-4 (\beta +\gamma )\right)-2 B \left(A G L+2 (\beta +\gamma )^2+2 G^2\right)+B^2 G (L-4 (\beta +\gamma ))}{16 BG(\beta+\gamma)},\] $F'(x_{0})=0$ and $F''(x_{0})=32 BG(\beta+\gamma)>0$ by the given condition $(i)$. Therefore, $F(x)\geq F(x_{0})$. Observe that
	\begin{equation*}
	\begin{split}
	F(x_{0})=\frac{1}{16 B G (\beta +\gamma )}&\big(\big(G \left(A^2 L+4 (\beta +\gamma )\right)-2 B \left(A G L+2 (\beta +\gamma )^2+2 G^2\right)\\
	&+B^2 G (4 (\beta +\gamma )+L)\big) \big(G \left(A^2 L-4 (\beta +\gamma )\right)+B^2 G (L-4 (\beta +\gamma ))\\
	& +B \left(-2 A G L+4 (\beta +\gamma )^2+4 G^2\right)\big)-2 G (A-B)^2 \\
	&\big(G L \left(A^2 L-4 (\beta +\gamma )\right)+B^2 G L (L-4 (\beta +\gamma ))\\
	&-2 B \left(A G L^2+2 G^2 (L-2 (\beta +\gamma ))-2 L (\beta +\gamma ) (-\beta -\gamma +2 L)\right)\big)\big)
	\end{split}
	\end{equation*}
	and $F(x_{0})\geq 0$ by the given condition $(ii)$.
	
	(b) We will next show that $|v/\pi|\geq 1/2$ which by using ~\eqref{p7eqn2.60} is same as to show that $g(t)\geq 0$ $(-\pi \leq t \leq \pi)$, where
	\[ g(t)=4(A-B)^{2}m^{4}(t)(L^{2}+G^{2}+2LG\cos t)-(1+B^{2}+2B \cos t)((\beta +\gamma)^{2}+G^{2}+2(\beta +\gamma)G \cos t)\]
	After substituting $x=\cos t$ $(-\pi \leq t \leq \pi)$ and $m=m(t)$, the above inequality reduces to $H(x)\geq 0$ for all $x$ with $-1\leq x \leq 1$, where
	\[H(x)=4(A-B)^{2}m^{4}(L^{2}+G^{2}+2LGx)-(1+B^{2}+2Bx)((\beta +\gamma)^{2}+G^{2}+2(\beta +\gamma)G x). \]
	In view of $(i)$, $(iii)$, $(iv)$ and the fact that $-1 \leq m \leq 1$, we see that $H''(x)=-8BG(\beta+\gamma)<0$ and hence $H'(x)\leq H'(-1)=8 m^4 G (A-B)^2 (\beta +\gamma +k)-2 (B-1)^{2} G (\beta +\gamma )-2 B (-G+\beta +\gamma )^2\leq 0$. Thus, $H(x)\geq H(1)=4 m^4 (A-B)^2 (\beta +\gamma +G+k)^2-(B+1)^2 (\beta +\gamma +G)^2=:\psi(k)$. Using $(iv)$, we observe that $\psi''(k)=8 m^4 (A-B)^2 \geq 0$ and hence for $k\geq 1$, we have $\psi'(k)\geq \psi'(1)=8 m^4 (A-B)^2 (\beta +\gamma +G+1)\geq 0$. Thus by using $(v)$, we get $H(x)\geq \psi(k)\geq \psi(1)=4 m^4 (A-B)^2 (\beta +\gamma +G+1)^2-(B+1)^2 (\beta +\gamma +G)^2 \geq 0$. This completes the proof.
\end{proof}

The next corollary is obtained by substituting $p(z)=zf'(z)/f(z)$ with $\gamma=0$, $B=-1$ and $A=1-2\alpha$, $(0 \leq \alpha<1)$ in Theorem ~\ref{p7thm2.3}.

\begin{corollary}\label{p7cor3.17}
	Let $1/2 < \alpha<1$, $-1 \leq \beta<0$ and $k \geq1$ satisfy the conditions $(2 \alpha ^2+\alpha -3)^2 \beta ^2+(4 \alpha ^4-12 \alpha ^3+13 \alpha ^2+2 \alpha -3) k^2+2 (4 \alpha ^4-20 \alpha ^3+17 \alpha ^2+2 \alpha -3) \beta  k \leq 0$ and $(\alpha ^2+2 \alpha -1) \beta ^2\leq 4 (\alpha -1)^2 (2 \alpha -1) \beta  (\beta +k)$. If the function $f \in \mathcal{A}$ satisfies the subordination
	\[\frac{z f'(z)}{f(z)} + \frac{1}{\beta}\left(1+\frac{zf''(z)}{f'(z)}-\frac{zf'(z)}{f(z)}\right)\prec \varphi_{PAR}(z),\]
	then $f \in \mathcal{S}^{*}(\alpha).$
\end{corollary}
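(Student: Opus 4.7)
The plan is to follow the contradiction-by-maximum-modulus scheme already used in Theorems~\ref{p7thm2.2} and~\ref{p7thm2.4}. Set
\[ P(z) = p(z) + \frac{zp'(z)}{\beta p(z)+\gamma}, \qquad w(z) = \frac{p(z)-1}{A-Bp(z)}, \]
so that $p(z) = (1+Aw(z))/(1+Bw(z))$ and the conclusion reduces to showing $|w(z)|<1$ on $\mathbb{D}$. If this fails, Lemma~\ref{p7lem1} supplies a point $z_0 \in \mathbb{D}$ with $w(z_0) = e^{it}$ and $z_0 w'(z_0) = k w(z_0)$ for some $k \geq 1$; the same algebra as in \eqref{p7eqn2.5} then delivers a closed-form expression for $P(z_0)$ in terms of $t$, $k$, $G := A\beta+B\gamma$, and $L := k+\beta+\gamma$.

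The novelty relative to the earlier theorems is how to encode non-membership $P(z_0) \notin \varphi_{PAR}(\mathbb{D}) = \Omega_P$. Introduce $h(z) := \sqrt{\pi^{2}(P(z)-1)/2} = u+iv$, and note that $\varphi_{PAR}$ is inverted by $\sqrt{z} = (e^{\zeta}-1)/(e^{\zeta}+1)$ with $\zeta = h$, so $P(z_0) \in \Omega_P$ is equivalent to $|(e^{h(z_0)}-1)/(e^{h(z_0)}+1)|<1$, i.e., to $\RE e^{h(z_0)}>0$, i.e., to $\cos v > 0$. Hence to contradict the hypothesis it is enough to prove the sandwich $1/2 \leq |v|/\pi \leq 1$, which forces $\cos v \leq 0$. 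A direct computation of $\IM h(z_0)$ from the closed form of $P(z_0)$ expresses $|v|/\pi$ as $\sqrt{A-B}\,|m(t)|\,|Ge^{it}+L|^{1/2}/(\sqrt{2}\,|1+Be^{it}|^{1/2}\,|Ge^{it}+\beta+\gamma|^{1/2})$, where $m(t)$ is a sine, hence $|m(t)| \leq 1$.

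Squaring each half of the sandwich and substituting $x = \cos t \in [-1,1]$ reduces it to the nonnegativity of a quadratic in $x$. For $|v|/\pi \leq 1$ one obtains a quadratic $F(x)$ whose second derivative equals $32\,BG(\beta+\gamma)$, which is positive by hypothesis (i); the global minimum therefore sits at an interior critical point $x_0$, and the inequality $F(x_0) \geq 0$ rearranges into hypothesis (ii). For $|v|/\pi \geq 1/2$ one obtains a quadratic $H(x)$ with $H''(x) = -8\,BG(\beta+\gamma) < 0$; using $|m(t)| \leq 1$ together with (iii) gives $H'(-1) \leq 0$, so $H$ is decreasing on $[-1,1]$ and $H(x) \geq H(1) =: \psi(k)$. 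Hypothesis (iv) then shows $\psi$ is convex in $k$ with $\psi'(1) \geq 0$, so $\psi(k) \geq \psi(1)$ for $k \geq 1$, and finally hypothesis (v) gives $\psi(1) \geq 0$.

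The main obstacle is organisational rather than conceptual: the single statement $P(z_0) \notin \Omega_P$ must be refuted through \emph{two} separate quadratic inequalities, and the lower bound is the delicate one because the auxiliary factor $m(t)$ enters only via the crude estimate $|m(t)| \leq 1$, forcing a three-stage monotonicity argument (first in $x$, then in $k$, against a worst-case $m$). Matching the five hypotheses (i)--(v) to the five sign checks $F'' > 0$, $F(x_0) \geq 0$, $H'(-1) \leq 0$, $\psi'(1) \geq 0$, and $\psi(1) \geq 0$ is the real bookkeeping burden; once the matching is fixed, each verification is routine algebra entirely parallel to what is carried out in Theorems~\ref{p7thm2.2} and~\ref{p7thm2.4}.
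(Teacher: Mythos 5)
What you have written is, almost line for line, the paper's proof of Theorem~\ref{p7thm2.3} (the general Briot--Bouquet result for $\varphi_{PAR}$), not a proof of Corollary~\ref{p7cor3.17}. The paper obtains the corollary in one step, by setting $p(z)=zf'(z)/f(z)$, $\gamma=0$, $B=-1$ and $A=1-2\alpha$ in that theorem; the entire content of the corollary's proof is the verification that, under this substitution, the theorem's five hypotheses collapse to the corollary's stated conditions. Your outline never performs this reduction: you invoke ``hypotheses (i)--(v)'', but the corollary has no such hypotheses --- it has only the parameter ranges $1/2<\alpha<1$, $-1\le\beta<0$ and two polynomial inequalities in $\alpha,\beta,k$. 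As it stands, the argument proves a statement whose hypotheses are not the ones you were asked to assume.

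The missing bookkeeping is concrete and checkable. With $\gamma=0$, $B=-1$, $A=1-2\alpha$ one has $G=(1-2\alpha)\beta$, $L=k+\beta$ and $\beta+\gamma=\beta$, so: condition (i), namely $BG(\beta+\gamma)>0$, becomes $(2\alpha-1)\beta^{2}>0$, i.e.\ $\alpha>1/2$; condition (iv) becomes $1+\beta\ge 0$ and $(1-2\alpha)\beta\ge 0$, i.e.\ $-1\le\beta<0$; condition (v) is vacuous because its right-hand side carries the factor $(B+1)^{2}=0$; condition (iii) rearranges exactly to $(\alpha^{2}+2\alpha-1)\beta^{2}\le 4(\alpha-1)^{2}(2\alpha-1)\beta(\beta+k)$, the corollary's second inequality; and condition (ii) reduces to the first displayed inequality. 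You also need the (routine) identity $zp'(z)/p(z)=1+zf''(z)/f'(z)-zf'(z)/f(z)$ for $p=zf'/f$, which turns the corollary's subordination hypothesis into $p+zp'/(\beta p)\prec\varphi_{PAR}$ and the conclusion $p\prec(1+(1-2\alpha)z)/(1-z)$ into $f\in\mathcal{S}^{*}(\alpha)$. Supplying these reductions --- or simply citing Theorem~\ref{p7thm2.3} instead of reproving it --- closes the gap; the analytic core of your outline (the map $h=\sqrt{\pi^{2}(P-1)/2}$, the sandwich $1/2\le|v|/\pi\le 1$, and the two quadratics in $x=\cos t$) is the same as the paper's and is sound.
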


\end{document}